\documentclass[10pt,draft]{article}

\usepackage{amsmath,amssymb,amsthm}
\usepackage{color}
\usepackage{mathrsfs} 
\usepackage{fancyhdr} 
\usepackage{dsfont} 
\usepackage{enumerate} 
\usepackage[normalem]{ulem}
\usepackage{a4wide}
\overfullrule=0pt

%

\newcommand{\D}{\mathbb{D}}

\newcommand{\N}{\mathbb{N}}

\newcommand{\R}{\mathbb{R}}


\newcommand{\GG}{\mathscr{G}}
\newcommand{\HH}{\mathscr{H}}
\newcommand{\LL}{\mathscr{L}}

\newcommand{\PP}{\mathscr{P}}



\newcommand{\vv}{{\mbox{\boldmath$v$}}}





\newcommand{\mmu}{{\mbox{\boldmath$\mu$}}}

\newcommand{\rrho}{{\mbox{\boldmath$\rho$}}}





\newcommand{\sfs}{{\sf s}}
\newcommand{\sft}{{\sf t}}

\newcommand{\sfA}{{\sf A}}

\newcommand{\sfX}{{\sf X}}

\newcommand{\rmb}{{\mathrm b}}
\newcommand{\rmc}{{\mathrm c}}

\newcommand{\rme}{{\mathrm e}}

\newcommand{\Kliminf}{K\kern-3pt-\kern-2pt\mathop{\rm lim\,inf}\limits}  
\newcommand{\supp}{\mathop{\rm supp}\nolimits}   
\renewcommand{\d}{{\mathrm d}}

\newcommand{\restr}[1]{\lower3pt\hbox{$|_{#1}$}}
\newcommand{\topref}[2]{\stackrel{\eqref{#1}}#2}
\newcommand{\Leb}[1]{{\mathscr L}^{#1}}      

\newcommand{\down}{\downarrow}              
\newcommand{\up}{\uparrow}
\newcommand{\eps}{\varepsilon}  
\newcommand{\nchi}{{\raise.3ex\hbox{$\chi$}}}

\newcommand{\Rd}{{\R^d}}





\setlength{\marginparwidth}{3cm}



 \newenvironment{G}{}{}
 \renewcommand{\GG}[1]{{#1}}
 
 \newcommand{\GGr}[1]{}
\newcommand{\Killed}[1]{}
\renewcommand{\d}{{\mathrm d}}

\renewcommand{\div}{{\nabla\cdot}}
\newcommand{\Cost}[3]{{\mathcal C_{#1}(#2,#3)}}
\newcommand{\phiu}{\phi^1}
\newcommand{\phid}{\phi^2}
\newcommand{\varphiu}{\varphi^1}
\newcommand{\varphid}{\varphi^2}
\newcommand{\varphij}{\varphi^j}
\newcommand{\xu}{x_1}
\newcommand{\xd}{x_2}
\newcommand{\yu}{y_1}
\newcommand{\yd}{y_2}
\newcommand{\byu}{\bar y_1}
\newcommand{\byd}{\bar y_2}
\newcommand{\Oper}[1]{\LL[#1]}
\newcommand{\Opernm}[2]{\LL_{#1}[#2]}
\newcommand{\variabledoubling}{variable-doubling\ }
\renewcommand{\D}{{\mathrm D}}
\newcommand{\Led}{\Leb{d}}      
\let\mu\rho

\newcounter{indice}

\newtheorem{teor}{Theorem}[section]
\newtheorem*{tteor}{Theorem}
\newtheorem{lemma}[teor]{Lemma}
\newtheorem{prop}[teor]{Proposition}
\newtheorem{cor}[teor]{Corollary}

\theoremstyle{definition}
\newtheorem{remark}[teor]{Remark}

\pagestyle{fancy}                  
\rhead{\today}                     

\newcommand{\meta}[1]{}

\title{Contraction of general transportation costs along solutions to Fokker-Planck equations with monotone drifts}
\author{Luca Natile\thanks{Dipartimento di Matematica,
 Universit\`a di Pavia, \textsf{luca.natile@unipv.it}}
\and
Mark Peletier\thanks{Department of Mathematics and Computer Science
  and Institute for Complex Molecular Systems,
 Technische Universiteit Eindhoven, \textsf{m.m.peletier@tue.nl}}
\and
 Giuseppe Savar\'e\thanks{Dipartimento di Matematica,
 Universit\`a di Pavia, \textsf{giuseppe.savare@unipv.it}}}

\begin{document}
\maketitle
\begin{abstract}
We shall prove new contraction properties of general transportation costs
along nonnegative measure-valued 
solutions to Fokker-Planck equations in $\Rd$, when the drift is a
monotone (or $\lambda$-monotone) operator.
A new duality approach to contraction estimates
has been developed: it relies on the Kantorovich dual formulation of
optimal transportation problems and on a \variabledoubling  technique. The latter is used to derive a new comparison property of solutions of the backward Kolmogorov (or dual)
equation.
The advantage of this technique is twofold: it directly applies to
distributional solutions without requiring stronger regularity and it
extends the Wasserstein theory of Fokker-Planck equations with gradient drift terms started
by \textsc{Jordan-Kinderlehrer-Otto} \cite{Jordan-Kinderlehrer-Otto98}
to more general costs and monotone drifts, without requiring the drift to be a gradient and without assuming any growth conditions.
\end{abstract}

\section{Introduction}

 \meta{General points: 
 \begin{enumerate}
 \item should we discuss the expected regularity of the solution? For
   instance, if $B$ is discontinuous, what regularity do we get for
   $\rho$? See the marginal comment below.
   \begin{G}
     I added a comment just before Theorem 1; instead of the
     regularization effect in $L^p$ (which, at least locally, always holds by
     [7]), I just observed that even the invariant measure could be
     non-smooth.
   \end{G}
 \item Should we note that the comparison property
   (Theorem~\ref{thm:crucial}) is proved for smooth data, but should
   hold much more generally?
   {I thought that $C^{2,1}_\rmb$ would be a natural assumption for
     applying classical maximum principle; what kind of regularity do
     you have in mind?}\\
     Anything that one can approximate pointwise by solutions of the smooth problem.
     I added a suggestion in the text below.
     \GG{I just removed ``unbounded'' referred to solutions, since I am not sure
       that you can apply the method and prove uniqueness in that
       case. My doubts come from the fact that when you argue by approximation, you get a
       uniqueness result just for the class of \emph{approximable
         solution}, which could be a proper subclass of all the
       reasonable solutions of the equation}
 \item
   \GG{I restored variable-doubling}
 \end{enumerate}}

The aim of this paper is to obtain new uniqueness and contractivity results for
nonnegative measure-valued solutions to the Fokker-Planck equation 
\begin{equation} \label{eq:1}
  \partial_t \rho-\Delta \rho-\div(\rho B)=0,\quad \rho\restr{t=0}=\rho_0,
\end{equation}
where $B:\Rd\to\Rd$ is a Borel $\lambda$-monotone operator, $\lambda\in \R$, i.e.
\begin{equation}
  \label{eq:5}
  \langle B(x)-B(y),x-y\rangle\ge \lambda \,\big|x-y\big|^2\quad\text{for every }x,y\in \Rd.
\end{equation}
Here we consider a weakly continuous family of probability measures
$(\mu_t)_{t\ge0} \subset \PP(\Rd)$ satisfying 
the equation \eqref{eq:1} in the sense of distributions
\begin{equation} \label{eq:2}
  \int_0^{+\infty}\int_\Rd \Big(\partial_t\zeta+\Delta\zeta-B\cdot
  \nabla \zeta\Big)\,\d\mu_t\d t=0\quad\forall\, \zeta\in
  C^\infty_{\mathrm c}(\Rd\times (0,+\infty)),
\end{equation}
with the initial datum $\mu_0$.

Equations of this type are the subject of several papers by
\textsc{Bogachev}, \textsc{Da Prato}, \textsc{Krylov},
\textsc{R\"ockner}, and \textsc{Stannat},
{who consider a very general situation where the Laplacian is replaced by a second order elliptic operator with variable coefficients and $B$ is locally bounded.}
Existence of solutions has been proved by  \cite[Cor.~3.3]{Bogachev-DaPrato-Rockner08}, 
uniqueness has been considered in
\cite{Bogachev-DaPrato-Rockner-Stannat07} under general
growth-coercivity conditions on $B$,
and regularity has been investigated by \cite{Bogachev-Krylov-Rockner01}: in particular, it has been shown that
$\mu_t$ is absolutely continuous {with respect to the Lebesgue measure} for  $\Leb{1}$-a.e.~$t$.

  When $B$ is Lipschitz continuous, uniqueness can be obtained by
  standard duality arguments, see
  e.g.~\cite[Sec.~3]{Ambrosio-Savare-Zambotti09}.
  Here we want to obtain a more precise stability estimate on the
  solutions of \eqref{eq:1}, only assuming
  monotonicity of $B$ without any growth condition.
  To achieve this aim, we adopt the point of view of optimal
  transportation.

  \paragraph{The Wasserstein approach to Fokker-Planck equation in the
    gradient case.}
  When $B$ is the gradient of a $\lambda$-convex function $V:\Rd\to\R$
  then \eqref{eq:1} can be considered as the \emph{gradient flow} of
  the perturbed entropy functional 
  \begin{equation}
    \label{eq:6}
    \HH(\rho):=\int_\Rd u(x)\log u(x)\,\d x+\int_\Rd
    V(x)\,\d\rho(x)\quad
    \rho=u\Leb d
  \end{equation}
  in the space $\PP_2(\Rd)$ of probability measures with finite
  quadratic moments endowed with the so called $L^2$-Kantorovich-Rubinstein-Wasserstein
  distance $W_2(\cdot,\cdot)$. This distance can be defined 
  by
  \begin{equation}
    \label{eq:7}
    \begin{aligned}
      W_2(\rho^1,\rho^2):=\min\Big\{&\int_{\Rd\times\Rd}\big|x_1-x_2\big|^2\,\d\rrho(x_1,x_2):\rrho\in
      \PP(\Rd\times\Rd)\\&
      \rrho\text{ is a coupling between $\rho^1$ and
        $\rho^2$}
      \Big\}      
    \end{aligned}
  \end{equation}
  in terms of couplings, i.e.\ measures $\rrho$ in the product space
  $\Rd\times\Rd$ whose marginals are $\rho^1$ and $\rho^2$
  respectively, so that $\rrho(E\times\Rd)=\rho^1(E)$ and
  $\rrho(\Rd\times E)=\rho^2(E)$ for every Borel subset $E\subset
  \Rd$. It is possible to prove that \emph{optimal couplings}
  realizing the minimum in \eqref{eq:7} always exist.
  
  This remarkable interpretation found in
  \cite{Jordan-Kinderlehrer-Otto98} gave rise to a series of studies on the
  relationships between certain classes of diffusion equations and
  distances between probability measures induced by optimal transport
  problems (see e.g. the general overviews of
  \cite{Villani03,Ambrosio-Gigli-Savare08,Villani09}).
  One of the strengths of this approach is a new geometric insight
  (developed in \cite{Otto01}) in
  the evolution process: in the case of \eqref{eq:1} the
  $\lambda$-convexity of the potential $V$ reflects a
  $\lambda$-convexity property (also called \emph{displacement
    convexity}) of the functional $\HH$ along the geodesics of
  $\PP_2(\Rd)$. This nice feature, discovered by \cite{McCann97},
  suggests that one can adapt some typical basic existence,
  approximation, and regularity results for
  gradient flows of convex functionals in Euclidean spaces or
  Riemannian manifolds
  to the measure-theoretic setting of $\PP_2(\Rd)$. This program has
  been carried out (see e.g.\ \cite{Ambrosio-Gigli-Savare08}) and,
  among the most interesting estimates, it provides the
  $\lambda$-contraction property
  \begin{equation}
    \label{eq:8}
    W_2(\rho^1_t,\rho^2_t)\le \rme^{-\lambda
      t}\,W_2(\rho_0^1,\rho_0^2)\qquad\text{for every }t\ge 0,
  \end{equation}
  where $\rho^i_t$, $i=1,2$, are the solutions to \eqref{eq:1}
  starting from the initial data $\rho_0^i\in \PP_2(\Rd)$.
  \paragraph{Two strategies for the derivation of the
    contraction estimate \eqref{eq:8} in the gradient case.}
  In order to prove \eqref{eq:8} in the gradient case $B=\nabla V$, essentially
  two basic strategies have been proposed:
  \begin{enumerate}
  \item A first approach, developed by
    \cite{Carrillo-McCann-Villani06} for smooth evolutions and by
    \cite{Ambrosio-Gigli-Savare08} in a measure-theoretic setting,
    starts from equation \eqref{eq:1} written in the form
    \begin{equation}
      \label{eq:10}
      \partial_t\rho+\div(\rho\vv)=0,\quad \vv=-\Big(\frac{\nabla
        u}u+\nabla V\Big),\quad \rho=u\,\Leb d,
    \end{equation}
 and it is based on two ingredients: the first one is the formula which evaluates the
  derivative of  the squared Wasserstein distance from a fixed
  measure $\sigma$ along {the (absolutely continuous)} curve $\rho$
  in $\PP_2(\Rd)$
  \begin{equation}
    \label{eq:11}
    \frac\d{\d t}\frac 12
    W_2^2(\rho_t,\sigma)=\int_{\Rd\times\Rd}\langle
    \vv_t(x),y-x\rangle \,\d\rrho_t(x,y)\quad \text{for $\Leb 1$-a.e.\ $t>0$}
  \end{equation}
  where $\rrho_t$ is an optimal coupling between $\rho_t$ and
  $\sigma$.

  The second ingredient is the ``subgradient'' property of the vector
  field $\vv_t$ given by \eqref{eq:10}, related to the displacement
  convexity of $\HH$: in the case
  $\lambda=0$ it  reads as
  \begin{equation}
    \label{eq:12}
    \int_{\Rd\times\Rd}\langle
    \vv_t(x),y-x\rangle \,\d\rrho_t(x,y)\le
    \HH(\sigma)-\HH(\rho_t)\quad
    \text{if }\vv_t=-\Big(\frac{\nabla u_t}{u_t}+\nabla V\Big).
  \end{equation}
  Combination of \eqref{eq:11} and \eqref{eq:12} yields the so called
  Evolution Variational Inequality
  \begin{equation}
    \label{eq:13}
    \frac\d{\d t}\frac 12
    W_2^2(\rho_t,\sigma)\le \HH(\sigma)-\HH(\rho_t)\quad
    \text{for every }\sigma\in \PP_2(\Rd)
  \end{equation}
  which easily yields \eqref{eq:8} for $\lambda=0$ by a \variabledoubling 
  argument (see \cite[Theorem 11.1.4]{Ambrosio-Gigli-Savare08}).

  The main technical point here is that \eqref{eq:12} requires
  $\vv_t\in L^2(\rho_t)$ and \eqref{eq:11} holds
  if
  {for every $0<t_0<t_1<+\infty$ }
  \begin{equation}
    \label{eq:14}
    \int_{t_0}^{t_1} \int_\Rd |\vv_t|^2\,\d\rho_t\,\d t=    \int_{t_0}^{t_1} \int_\Rd
    \left|\frac{\nabla u_t}{u_t}+\nabla V\right|^2\,\d\rho_t\,\d
    t<+\infty,
  \end{equation}
  which should be imposed (in a suitable distributional sense) as an
  \emph{a priori} regularity assumption on the 
  solution of \eqref{eq:1}. {We do not know if solutions to
    \eqref{eq:2} exhibit a similar regularization effect.}
  A second, even more difficult
  point prevents a simple extension of
  \eqref{eq:13} to the general non-gradient case: it is the lack of a
  potential $V$ and therefore of an entropy-like functional $\HH$
  satisfying
  an inequality similar to \eqref{eq:12}.
  \item
    A second approach has been proposed by
    \cite{Otto-Westdickenberg05} and further developed in \cite{Daneri-Savare08,Carrillo-Lisini-Savare-Slepcev10}: it is based on the
    \textsc{Benamou-Brenier} \cite{Benamou-Brenier00} representation formula for the
    Wasserstein distance
    \begin{equation}
      \label{eq:15}
      \begin{aligned}
        &W_2^2(\mu_0,\mu_1)= \inf\Big\{\int_0^1 \int_\Rd
        |\vv_t |^2\,\d \rho_t\,\d t\,:\\
        &\quad
        \partial_t\rho_t+\nabla\cdot(\rho_t\mathbf\vv_t)=0\ \text{in
          $\Rd\times (0,1)$,}\quad \mu_0=\rho\restr{t=0},\quad
        \mu_1=\rho\restr{t=1}\Big\}
      \end{aligned}
    \end{equation}
    and on a careful analysis of the effect of the evolution semigroup
    generated by the equation on curves in $\PP_2(\Rd)$ and its 
    Riemannian tensor $\int_\Rd |\vv|^2\,\d\rho$.
    This technique involves various repeated differentiations and
    works quite well if a nice semigroup preserving smoothness and
    strict positivity of the densities has already
    been defined. Once contraction has been proved on smooth initial
    data, the evolution can be extended to more general ones but
    it seems hard to extend the uniqueness result to cover a general
    distributional solution to the equation.
  \end{enumerate}

  \paragraph{Main result of the paper: contraction estimates for distributional
  solutions.}
  Our purpose is twofold:
  \begin{itemize}\item 
    First of all we want to find a new approach working directly on
    measure-valued solutions to \eqref{eq:1} just satisfying the usual
    distributional formulation \eqref{eq:2}.\\
    \begin{G}
      We note that in general \eqref{eq:1} does not exhibit the same
      regularization effect of the heat equation.  Even in the
      gradient case $B=\nabla V$, there exist solutions $\rho_t$ to
      \eqref{eq:2} which are not of class $C^1(\Rd)$ for every
      $t\ge0$: take, e.g., the invariant measure $\rho_t\equiv
      Z^{-1}\rme^{-V}$ for a suitable convex function $V\not\in
      C^1(\Rd)$ with $\rme^{-V}\in L^1(\Rd)$.  Moreover,
      distributional solutions are easily obtained by approximation
      arguments, as regularization or splitting methods, and they
      should be better suited to deal with the infinite dimensional
      case, as in \cite{Ambrosio-Savare-Zambotti09}: a stability
      result for such a weak class of solutions should be useful in
      these cases.
    \end{G}
    \item Second, we want to cover the case of an arbitrary monotone field
    $B$, without any growth restriction, and to extend contraction
    estimates to more general transportation costs.
  \end{itemize}
  To this aim, let us first introduce the general cost functional
\begin{equation} \label{cost}
\begin{aligned}
  \Cost h{\mu^1}{\mu^2} := \inf\Big\{&\int_{\Rd\times \Rd}h(|x_1-x_2|)\,\d\mmu(x_1,x_2):
  \mmu \in \PP(\Rd\times\Rd), \\&
  \rrho\text{ is a coupling between $\rho^1$ and
        $\rho^2$}\Big\}.
\end{aligned}
\end{equation}
Throughout this paper we assume that
\begin{quote}
$h:[0,+\infty)\to [0,+\infty)$ is a continuous and \textbf{non-decreasing} function with $h(0)=0$.
\end{quote}

  Among the possible interesting choices of $h$, the case
    $h(r):=r^p$ is associated to the family of $L^p$ Wasserstein
    distances (whose $L^2$-version has been introduced in
    \eqref{eq:7})
  on the space $\PP_p(\Rd)$ of all the probability measures with
  moment of order $p$.
  When $h$ is a bounded concave function {satisfying $h(r)>0$ if
    $r>0$, $d(x,y):= h(|x-y|)$
    is a bounded and complete distance function on $\Rd$ inducing the usual
    euclidean topology so that }
  $\Cost h\cdot\cdot$ is a
  complete metric on the space $\PP(\Rd)$ whose topology coincides
  with the usual weak one
  (see, e.g., \cite[Proposition
    7.1.5]{Ambrosio-Gigli-Savare08}).
  
  Since we are not assuming any homogeneity on the general cost
  function $h$, its rescaled versions
  \begin{equation}
    \label{eq:16}
    h_s(r):=h(r\,\rme^s)\quad s\in \R,\ r\ge0
  \end{equation}
  will be useful.
Let us now state our main result:
\begin{teor} \label{teor:main}
  If $\rho^1,\rho^2$ are two distributional solutions to \eqref{eq:2}
  satisfying the sum\-mability condition
  \begin{equation}
    \label{Cond}
    \int_{t_0}^{t_1}\int_\Rd |B(x)-\lambda \, x|\,\d\mu_t(x)\,\d t< +\infty\quad\text{for every }0<t_0<t_1<+\infty,
  \end{equation}
  then they satisfy
  \begin{equation} \label{++}
    \Cost {h_{\lambda t}}{\mu^1_t}{\mu^2_t} \le \Cost
    h{\mu^1_0}{\mu^2_0}\quad\text{for every }t\ge0.
  \end{equation}
  In particular, if $\rho^1_0=\rho^2_0$ then $\rho^1$ and $\rho^2$
  coincide for every time $t\ge0$.
\end{teor}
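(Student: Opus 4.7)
The plan is to reduce the cost-contraction inequality \eqref{++} to a pointwise comparison principle for solutions of the dual (backward Kolmogorov) equation, and then to establish that comparison principle via a variable-doubling argument.

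First I would fix $T>0$ and invoke Kantorovich duality for $\Cost h\cdot\cdot$: \eqref{++} at $t=T$ is equivalent to showing that whenever $(\varphi^1,\varphi^2)$ are bounded continuous functions with
$$\varphi^1(x_1)+\varphi^2(x_2)\le h_{\lambda T}(|x_1-x_2|)\quad\text{for all }x_1,x_2\in\Rd,$$
the dual sum satisfies $\int\varphi^1\,\d\mu^1_T+\int\varphi^2\,\d\mu^2_T\le \Cost h{\mu^1_0}{\mu^2_0}$. To produce competitors in the dual problem at $t=0$, I would let $\zeta^j(s,x)$ solve (first for smooth, globally Lipschitz approximations of $B$ and $\varphi^j$) the backward Kolmogorov equation
$$\partial_s\zeta^j+\Delta\zeta^j-B\cdot\nabla\zeta^j=0\quad\text{on }[0,T]\times\Rd,\qquad \zeta^j(T,\cdot)=\varphi^j.$$
A smooth time-truncation of $\zeta^j$ is an admissible test in \eqref{eq:2}, and the summability assumption \eqref{Cond} is precisely what is needed to pass to the limit in the drift term, yielding the conservation identity $\int\zeta^j(0,\cdot)\,\d\mu^j_0=\int\varphi^j\,\d\mu^j_T$ for $j=1,2$.

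The heart of the argument is then the time-monotone comparison estimate
$$\zeta^1(s,x_1)+\zeta^2(s,x_2)\le h_{\lambda s}(|x_1-x_2|)=h(\rme^{\lambda s}|x_1-x_2|)\qquad\forall\,s\in[0,T],\ \forall\,x_1,x_2\in\Rd,$$
which at $s=T$ reduces to the hypothesis on $(\varphi^1,\varphi^2)$ and at $s=0$ exactly produces an admissible competitor in the dual problem for $\Cost h{\mu^1_0}{\mu^2_0}$. To prove it I would argue by contradiction: if $F(s,x_1,x_2):=\zeta^1(s,x_1)+\zeta^2(s,x_2)-h(\rme^{\lambda s}|x_1-x_2|)$ is positive somewhere, introduce doubled variables $(y_1,y_2)\in\Rd\times\Rd$ and analyze the maximum of
$$\Phi_{\eps,\delta}(s,x_1,x_2,y_1,y_2):=\zeta^1(s,x_1)+\zeta^2(s,x_2)-h(\rme^{\lambda s}|y_1-y_2|)-\tfrac1{2\eps}\bigl(|x_1-y_1|^2+|x_2-y_2|^2\bigr)-\delta\bigl(|x_1|^2+|x_2|^2\bigr).$$
At such a maximum the first-order conditions tie $\nabla\zeta^j(s,x_j)$ to $\tfrac1\eps(x_j-y_j)$ up to $O(\delta)$; summing the PDE for $\zeta^1,\zeta^2$, the drift contribution reduces to a multiple of $\la B(x_1)-B(x_2),y_1-y_2\ra/|y_1-y_2|$. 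By $\lambda$-monotonicity \eqref{eq:5} applied to $(x_1,x_2)$, and using $y_j-x_j\to 0$ as $\eps\to 0$, this term dominates the $s$-derivative $\lambda\,\rme^{\lambda s}|y_1-y_2|\,h'(\rme^{\lambda s}|y_1-y_2|)$ of $h_{\lambda s}$ up to $o(1)$, while the second-order Laplacian contribution is controlled via the Crandall--Ishii matrix inequality applied to the quadratic penalty. One then obtains $\partial_s\Phi_{\eps,\delta}\ge o(1)$ at the interior maximum, contradicting (after letting $\eps,\delta\to0$) the boundary condition $F\le 0$ at $s=T$.

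The hard part will be precisely this doubling step, for two reasons. First, $B$ is only Borel and monotone, with no growth or Lipschitz control, so $B(x_1)-B(x_2)$ cannot be estimated pointwise; the penalty $\tfrac1{2\eps}(|x_1-y_1|^2+|x_2-y_2|^2)$ is what couples the arguments of $B$ to those of the test function and turns \eqref{eq:5} into the inequality that absorbs $\partial_s h_{\lambda s}$. Second, $h$ is not assumed concave, so $h(\rme^{\lambda s}|x_1-x_2|)$ need not be superharmonic on $\Rd\times\Rd$ and a naive maximum principle directly on $F$ is unavailable; doubling decouples the awkward Laplacian from the geometric monotonicity estimate. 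Once the comparison is established for smooth bounded monotone $B$, an approximation argument based on \eqref{Cond} transfers the contraction \eqref{++} to general monotone drifts and distributional solutions of \eqref{eq:2}. Finally, the last assertion of the theorem follows by specializing \eqref{++} to any admissible $h$ which is strictly positive on $(0,+\infty)$, so that $\Cost{h_{\lambda t}}{\mu^1_t}{\mu^2_t}=0$ forces $\mu^1_t=\mu^2_t$.
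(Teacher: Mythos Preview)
Your overall strategy matches the paper's: Kantorovich duality reduces \eqref{++} to a comparison principle for solutions $\varphi^1,\varphi^2$ of the backward equation, proved first for smooth bounded monotone approximations of the drift and then transferred to the original $B$ via \eqref{Cond}. But the heart of the argument is considerably simpler than you propose, and your stated reason for introducing the auxiliary variables $(y_1,y_2)$ and the penalty is a misdiagnosis. The paper's ``variable doubling'' consists only in studying
\[
f(y_1,y_2,s)=\varphi^1(y_1,s)+\varphi^2(y_2,s)-h(|y_1-y_2|)
\]
directly on $\Rd\times\Rd\times[0,S]$, with no extra variables, no quadratic penalty, and no Crandall--Ishii lemma. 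At an interior maximum $(\bar y_1,\bar y_2,\bar s)$ the first-order conditions give $\nabla\varphi^1(\bar y_1,\bar s)=-\nabla\varphi^2(\bar y_2,\bar s)=g\,(\bar y_1-\bar y_2)$ with $g\ge 0$, so the drift contribution is $g\,\langle\tilde A(\bar y_1,\bar s)-\tilde A(\bar y_2,\bar s),\bar y_1-\bar y_2\rangle\ge 0$ by monotonicity, with the \emph{same} points appearing in $\tilde A$ and in the inner product---no $\eps\to 0$ realignment is needed. For the second-order term, the decisive observation (which makes the non-concavity of $h$ irrelevant) is translation invariance: since $h(|(\bar y_1+z)-(\bar y_2+z)|)=h(|\bar y_1-\bar y_2|)$, the map $z\mapsto f(\bar y_1+z,\bar y_2+z,\bar s)$ has a maximum at $z=0$, whence $\Delta\varphi^1(\bar y_1,\bar s)+\Delta\varphi^2(\bar y_2,\bar s)\le 0$. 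No superharmonicity of $h(|y_1-y_2|)$ on $\Rd\times\Rd$ is ever invoked, so the difficulty you identify does not arise, and your penalty only creates a spurious mismatch between the arguments of $B$ and the direction of the gradient that you then have to undo.

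Two further structural differences are worth noting. The paper does not carry the time-dependent barrier $h_{\lambda s}$ through the maximum-principle argument; instead a preliminary space--time rescaling (Section~\ref{subsec:rescaling}) converts the $\lambda$-monotone drift into a genuinely monotone, time-dependent one, and the comparison result (Theorem~\ref{thm:crucial}) is stated for a \emph{fixed} $h$, eliminating the $\partial_s h_{\lambda s}$ term from the computation. And since \eqref{Cond} only provides integrability on intervals bounded away from $0$, the duality identity you claim between $t=0$ and $t=T$ is in fact first established on $[s_0,s_1]$ with $s_0>0$; the passage $s_0\downarrow 0$ is carried out afterwards by weak continuity of $s\mapsto\sigma_s$ together with the cost-approximation Lemma~\ref{le:cost_approximation}.
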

Let us make explicit some consequences of \eqref{++} according to
the different signs of $\lambda$ and the behaviour of $h$ near $0$ and $+\infty$:
\begin{cor} \label{cor:main}
  Let $\rho^1,\rho^2$ be two distributional solutions to \eqref{eq:2}
  satisfying \eqref{Cond}. 
\begin{enumerate}[a)]
\item If $B$ is monotone, i.e.\ $\lambda \ge 0$, then
  $$\Cost h{\mu^1_t}{\mu^2_t} \le \Cost h{\mu^1_0}{\mu^2_0}.$$
\item If $B$ is $\lambda$-monotone with $\lambda > 0$ and $h$
  satisfies for some exponent $p>0$
  \begin{equation}
    \label{eq:9}
    h(\alpha\, r)\ge \alpha^p h(r)\quad \text{for every $\alpha\ge 1$
    and $r\ge0$}
  \end{equation}
  then
  $$\Cost h{\mu^1_t}{\mu^2_t} \le \rme^{-p\lambda \,t}\,\Cost h{\mu^1_0}{\mu^2_0}.$$
\item If $B$ is $\lambda$-monotone with $\lambda < 0$ and $h$
  satisfies
  for some exponent $p>0$
  \begin{displaymath}
    h(\alpha\, r)\ge \alpha^p h(r)\quad \text{for every $\alpha\le 1$
    and $r\ge0$}
  \end{displaymath}
  then
  $$\Cost h{\mu^1_t}{\mu^2_t} \le \rme^{-p\lambda \,t}\,\Cost
  h{\mu^1_0}{\mu^2_0}.$$
\end{enumerate} 
In the particular case of the Wasserstein distance $W_p$, $p\ge1$, we have
\begin{equation} \label{8} 
  W_p (\mu^1_t,\mu^2_t ) \le \rme^{-\lambda t}\,  W_p (\mu^1_0,\mu^2_0 ).
\end{equation}
\end{cor}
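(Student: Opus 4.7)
The plan is to deduce the corollary directly from Theorem \ref{teor:main} by comparing the rescaled cost $h_{\lambda t}(r)=h(r\rme^{\lambda t})$ with $h$ itself, and then taking an infimum over couplings.

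For part (a), with $\lambda\ge 0$ we have $\rme^{\lambda t}\ge 1$, so the monotonicity of $h$ gives $h_{\lambda t}(r)=h(r\rme^{\lambda t})\ge h(r)$ for every $r\ge0$. Integrating against any coupling $\mmu$ between $\mu^1_t$ and $\mu^2_t$ and taking the infimum yields $\Cost{h_{\lambda t}}{\mu^1_t}{\mu^2_t}\ge \Cost{h}{\mu^1_t}{\mu^2_t}$, so that \eqref{++} gives the desired inequality.

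For parts (b) and (c), the idea is the same but exploits the $p$-growth condition. In case (b) we pick $\alpha=\rme^{\lambda t}\ge 1$ (since $\lambda>0$, $t\ge0$), and in case (c) we pick $\alpha=\rme^{\lambda t}\le 1$ (since $\lambda<0$, $t\ge0$); in both cases the assumed inequality $h(\alpha r)\ge \alpha^p h(r)$ applies with the chosen $\alpha$, giving $h_{\lambda t}(r)\ge \rme^{p\lambda t}h(r)$. Integrating against an arbitrary coupling and infimizing therefore yields $\Cost{h_{\lambda t}}{\mu^1_t}{\mu^2_t}\ge \rme^{p\lambda t}\Cost{h}{\mu^1_t}{\mu^2_t}$, and combining with \eqref{++} produces the contraction factor $\rme^{-p\lambda t}$.

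Finally, for the Wasserstein case we take $h(r)=r^p$, which satisfies $h(\alpha r)=\alpha^p h(r)$ with equality for every $\alpha\ge 0$, so the hypothesis of (b), (c) holds regardless of the sign of $\lambda$ (and (a) covers $\lambda=0$ trivially). Applying the appropriate case gives $W_p^p(\mu^1_t,\mu^2_t)\le \rme^{-p\lambda t}W_p^p(\mu^1_0,\mu^2_0)$, and taking $p$-th roots yields \eqref{8}. There is no real obstacle here beyond checking that the chosen $\alpha$ lies in the admissible range for each sign of $\lambda$, which is immediate.
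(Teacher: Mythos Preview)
Your proof is correct and follows essentially the same approach as the paper: for (a) you use the monotonicity of $h$ together with $\rme^{\lambda t}\ge 1$ to compare $h$ with $h_{\lambda t}$, and for (b), (c) you apply the growth condition with $\alpha=\rme^{\lambda t}$ to obtain $h_{\lambda t}(r)\ge \rme^{p\lambda t}h(r)$, then invoke \eqref{++}. The treatment of the Wasserstein case via $h(r)=r^p$ is likewise the same.
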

Theorem \ref{teor:main} has a simple application to invariant
measures $\rho_\infty\in \PP(\Rd)$, which are stationary solutions of
\eqref{eq:2} and therefore satisfy
\begin{equation}
  \label{eq:66}
  \int_\Rd \Big(\Delta\zeta-B\cdot
  \nabla \zeta\Big)\,\d\mu_\infty=0\quad\forall\, \zeta\in
  C^\infty_{\mathrm c}(\Rd).
\end{equation}
\begin{cor}[Strongly monotone operators and invariant measures]
  Let us suppose that $B$ is strongly monotone, i.e.\
  $\lambda>0$. Then equation \eqref{eq:66}
  has at most one solution $\rho_\infty\in \PP(\Rd)$ satisfying the
  integrability condition
  \begin{equation}
    \label{eq:67}
    \int_\Rd |Bx-\lambda x|\,\d\rho_\infty(x)<\infty.
  \end{equation}
  For each solution $\rho_t$ to \eqref{eq:2}-\eqref{Cond} and each
  cost $h$ satisfying \eqref{eq:9} we have
  \begin{equation}
    \label{eq:68}
    \Cost h{\rho_t}{\rho_\infty}\le \rme^{-p\lambda \,(t-t_0)}\Cost h{\rho_{t_0}}{\rho_\infty}.
  \end{equation}
\end{cor}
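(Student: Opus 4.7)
The plan is to reduce both assertions of the corollary to Corollary \ref{cor:main}(b) by regarding any invariant measure as a constant-in-time distributional solution of \eqref{eq:2}. For any $\zeta\in C^\infty_{\mathrm c}(\Rd\times(0,+\infty))$, the time integral $\zeta^*(x):=\int_0^{+\infty}\zeta(x,t)\,\d t$ belongs to $C^\infty_{\mathrm c}(\Rd)$, the contribution of $\partial_t\zeta$ vanishes by compact support in time, and Fubini (justified because $\nabla\zeta$ is compactly supported and $|B|\le|B-\lambda\,\mathrm{Id}|+\lambda|x|$ is locally integrable with respect to $\rho_\infty$ by \eqref{eq:67}) combined with \eqref{eq:66} yields
$$
\int_0^{+\infty}\int_{\Rd}\bigl(\partial_t\zeta+\Delta\zeta-B\cdot\nabla\zeta\bigr)\,\d\rho_\infty\,\d t=\int_{\Rd}\bigl(\Delta\zeta^*-B\cdot\nabla\zeta^*\bigr)\,\d\rho_\infty=0,
$$
so $\mu_t\equiv\rho_\infty$ is a distributional solution of \eqref{eq:2}. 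The integrability \eqref{Cond} for this constant trajectory reduces to \eqref{eq:67} because the integrand is time-independent.

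For the contraction estimate \eqref{eq:68}, I would apply Corollary \ref{cor:main}(b) to the pair $\mu^1_t:=\rho_{t_0+t}$ and $\mu^2_t\equiv\rho_\infty$: both are distributional solutions of the autonomous equation \eqref{eq:2} satisfying \eqref{Cond}, and $h$ satisfies \eqref{eq:9} by hypothesis, so Corollary \ref{cor:main}(b) yields exactly \eqref{eq:68}. For uniqueness, let $\rho_\infty^1$ and $\rho_\infty^2$ both solve \eqref{eq:66} and satisfy \eqref{eq:67}. Applying Corollary \ref{cor:main}(b) to their two constant trajectories with the choice $h(r)=r$ (which satisfies \eqref{eq:9} with $p=1$ and yields $\Cost h{\cdot}{\cdot}=W_1(\cdot,\cdot)$) produces
$$
W_1(\rho_\infty^1,\rho_\infty^2)\le \rme^{-\lambda t}\,W_1(\rho_\infty^1,\rho_\infty^2)\qquad\forall\,t\ge 0.
$$
Whenever this quantity is finite, sending $t\to+\infty$ forces $W_1(\rho_\infty^1,\rho_\infty^2)=0$ and hence $\rho_\infty^1=\rho_\infty^2$.

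The main obstacle I foresee is ruling out the degenerate case $W_1(\rho_\infty^1,\rho_\infty^2)=\infty$. I would handle it by establishing a finite second-moment bound for any invariant measure directly from \eqref{eq:66}: the strong monotonicity $\lambda>0$ provides a unique $x^*\in\Rd$ with $B(x^*)=0$, and then $\la B(x),x-x^*\ra\ge \lambda|x-x^*|^2$. Formally testing \eqref{eq:66} with $|x-x^*|^2/2$ suggests the bound $\lambda\int_{\Rd}|x-x^*|^2\,\d\rho_\infty\le d$; making this rigorous requires approximation by smooth compactly-supported truncations of $|x-x^*|^2/2$ and control of the boundary correction terms via the hypothesis \eqref{eq:67}. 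Finite first moments of both $\rho_\infty^i$ then imply $W_1(\rho_\infty^1,\rho_\infty^2)<\infty$ and close the uniqueness argument.
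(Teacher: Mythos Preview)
Your reduction to Corollary~\ref{cor:main}(b) by viewing an invariant measure as a constant-in-time distributional solution of \eqref{eq:2}--\eqref{Cond} is exactly right, and the paper (which gives no separate proof of this corollary) clearly intends this. Your derivation of \eqref{eq:68} is correct.

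For uniqueness, however, your route through $W_1$ runs into a genuine obstacle that you flag but do not actually resolve. The moment-bound argument has two weak points. First, a merely Borel $\lambda$-monotone map $B$ need not have a zero; only its maximal monotone extension does, and $B$ may differ from it at the relevant point. Second, and more seriously, when you truncate $\tfrac12|x-x^*|^2$ by a cutoff $\chi_R$, the commutator term $\tfrac12|x-x^*|^2\,B\cdot\nabla\chi_R$ on the annulus $\{R/2\le|x|\le R\}$ is of order $R\,|B(x)|$; writing $|B|\le|B-\lambda x|+\lambda|x|$, hypothesis \eqref{eq:67} controls the first piece only if the tail of $\int|A|\,\d\rho_\infty$ decays like $o(1/R)$, while the second contributes a term of order $\lambda R^2\rho_\infty(\text{annulus})$, whose vanishing as $R\to\infty$ is essentially the second-moment bound you are trying to prove. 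I do not see how \eqref{eq:67} alone closes this loop.

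A cleaner route avoids unbounded costs: apply Theorem~\ref{teor:main} itself (not Corollary~\ref{cor:main}(b)) to the two constant solutions with a \emph{bounded} strictly increasing cost $h$, so that $\Cost h{\rho_\infty^1}{\rho_\infty^2}$ is automatically finite. Theorem~\ref{teor:main} gives $\Cost{h_{\lambda t}}{\rho_\infty^1}{\rho_\infty^2}\le\Cost h{\rho_\infty^1}{\rho_\infty^2}$, while $h_{\lambda t}\ge h$ (since $\lambda t>0$ and $h$ is nondecreasing) gives the reverse inequality, so the two costs coincide. If $\rrho$ is an optimal coupling for $h_{\lambda t}$ it is then also optimal for $h$, and $\int\big(h_{\lambda t}-h\big)(|x_1-x_2|)\,\d\rrho=0$; since $h$ is strictly increasing the integrand is strictly positive off the diagonal, forcing $\rrho$ to concentrate on $\{x_1=x_2\}$ and hence $\rho_\infty^1=\rho_\infty^2$.
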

Note that in the case $\lambda>0$ condition \eqref{eq:67} is weaker
than
$B\in L^1(\rho_\infty;\Rd)$.
\begin{remark}[An equivalent formulation of the contraction estimate]
  \label{rem:equivalent}
  We can give an equivalent version of \eqref{++} by keeping fixed the
  cost but rescaling the measures. In fact, we can associate to the
  solutions $\rho^1,\rho^2$ of \eqref{eq:2} their rescaled versions
  $\tilde\rho^1,\tilde\rho^2$ defined by
  \begin{equation}
    \label{eq:50}
    \tilde\rho^j(E):=\rho^j(\rme^{-\lambda t} \,E)\quad\text{for every
      Borel set }E\subset \Rd,\ j=1,2.
  \end{equation}
  Then $\tilde\rho^j$ is the push-forward of $\rho^j$ through the map
  $x\mapsto \rme^{\lambda t}x$ and satisfies the change-of-variables
  formula
  \begin{equation}
    \label{eq:51}
    \int_\Rd \zeta(y)\,\d\tilde\rho^j(y)=\int_\Rd \zeta(\rme^{\lambda
      t}\,x)\,\d\rho^j(x)\quad
    \text{for every }\zeta\in C_\rmb(\Rd).
  \end{equation}
  Inequality \eqref{++} is then equivalent to
  \begin{equation}
    \label{eq:52}
    \Cost h{\tilde\rho^1_t}{\tilde\rho^2_t}\le \Cost
    h{\rho^1_0}{\rho^2_0}\quad
    \text{for every }t>0.
  \end{equation}
\end{remark}
\paragraph{Strategy of the proof: Kantorovich duality and a
  \variabledoubling  technique.}
In order to prove Theorem \ref{teor:main} we develop a new strategy,
generalizing
\cite{Portegies-Peletier08}.
It relies on the well-known dual Kantorovich formulation~\cite{Villani03} of
the transportation cost \eqref{cost}:
\begin{equation} \label{dualintro}
  \begin{aligned}
    \Cost h{\mu^1}{\mu^2}=\sup\Big\{& \int_\Rd \phiu\,\d\mu^1+
    \int_\Rd \phid\,\d\mu^2:\\&\phiu,\phid\in C_{\rm b}(\Rd),\
    \phiu(\xu)+\phid(\xd)\le h(|\xu-\xd|)\Big\}.
  \end{aligned}
\end{equation}
This formula reduces the estimate of the cost $\Cost h{\rho^1_T}{\rho^2_T}$ of two
solutions of \eqref{eq:1} at a certain final time $T$ to the estimate
of
\begin{equation}
  \label{eq:18}
  \Sigma(\phiu,\phid;T):=\int_\Rd \phiu\,\d\mu^1_T+
    \int_\Rd \phid\,\d\mu^2_T
\end{equation}
for an arbitrary pair of functions $\phiu,\phid$ satisfying the
constraint
\begin{equation}
  \label{eq:17}
  \phiu(\xu)+\phid(\xd)\le h(|\xu-\xd|)\quad\text{for every
  }\xu,\xd\in \Rd.
\end{equation}
Assuming for the sake of simplicity that $B$ is monotone, bounded
and smooth, we can obtain an estimate of $\Sigma(\phiu,\phid;T)$ by
solving the final-value problem for the adjoint equation
\begin{equation}
  \label{eq:19}
  \partial_t \phi^i+\Delta\phi^i-B\cdot\nabla\phi^i=0\quad\text{in
  }\Rd\times(0,T),\quad
  \phi^i(\cdot,T):=\phi^i
\end{equation}
since the distributional formulation \eqref{eq:2} yields
\begin{equation}
  \label{eq:20}
  \Sigma(\phiu_T,\phid_T;T)=\Sigma(\phiu_0,
  \phid_0;0)
\end{equation}
The following crucial result, based on a
``\variabledoubling  technique'', provides the final step, showing that
$\phiu_0,\phid_0$ still satisfy the constraint
\eqref{eq:17} so that $\Sigma(\phiu_0,
\phid_0;0)\le \Cost h{\rho^1_0}{\rho^2_0}$.
\begin{teor}
  \label{thm:crucialintro}
If $\phiu,\phid\in C^{2,1}_{\mathrm b}(\Rd\times [0,T])$ are solutions
of \eqref{eq:19} in the case when $B$ is monotone, bounded and smooth,
such that
\begin{equation*}
  \phiu(\xu,T)+\phid(\xd,T)\le h(|\xu-\xd|)\quad\forall\,\xu,\xd\in \Rd,
\end{equation*}
then
\begin{equation*}
  \phiu(\xu,0)+\phid(\xd,0)\le h(|\xu-\xd|)\quad\forall\,\xu,\xd\in \Rd.
\end{equation*}
\end{teor}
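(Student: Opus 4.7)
The plan is to apply a parabolic comparison argument on the doubled space $\R^d \times \R^d \times [0, T]$ to the auxiliary function
\[
F(x_1, x_2, t) := \phi^1(x_1, t) + \phi^2(x_2, t) - h(|x_1 - x_2|).
\]
The terminal hypothesis supplies $F(\cdot, \cdot, T) \le 0$, and the conclusion is $F(\cdot, \cdot, 0) \le 0$. Since each $\phi^i$ satisfies the backward parabolic equation $\partial_t \phi^i + \Delta \phi^i - B \cdot \nabla \phi^i = 0$, the function $u(x_1, x_2, t) := \phi^1(x_1, t) + \phi^2(x_2, t)$ solves a doubled backward equation with drift $(B(x_1), B(x_2))$, so the natural maximum principle will propagate from $t = T$ toward $t = 0$. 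The genuine difficulty is that $v(x_1, x_2) := h(|x_1 - x_2|)$ is generally \emph{not} a pointwise supersolution of the doubled elliptic operator, because its Laplacian contains $h''(r) + (d-1)h'(r)/r$, which has no definite sign.

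The \variabledoubling trick removes this obstacle. At an interior maximum of $F$ in $(x_1, x_2)$, the negative semi-definiteness of $\mathrm{Hess}_{(x_1, x_2)} F$ on $\R^{2d}$ gives much more than its trace: tested against a vector of the form $(\xi, \xi)$ with $\xi \in \R^d$, the four blocks of $\mathrm{Hess}_{(x_1, x_2)} v$ cancel exactly (because $v$ depends only on $x_1 - x_2$), and I obtain
\[
\xi^T \bigl(\mathrm{Hess}_{x_1} \phi^1 + \mathrm{Hess}_{x_2} \phi^2\bigr) \xi \le 0 \qquad \text{for every } \xi \in \R^d,
\]
a bound in which the troublesome $h''$ has disappeared. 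Summing over an orthonormal basis yields $\Delta_{x_1} \phi^1 + \Delta_{x_2} \phi^2 \le 0$ at the maximum. The first-order conditions $\nabla_{x_i} F = 0$ force $\nabla_{x_1} \phi^1 = h'(r)(x_1 - x_2)/r = -\nabla_{x_2} \phi^2$, where $r := |x_1 - x_2|$, and the monotonicity of $B$ together with $h' \ge 0$ then give $B(x_1) \cdot \nabla_{x_1}\phi^1 + B(x_2) \cdot \nabla_{x_2}\phi^2 = (h'(r)/r)\bigl(B(x_1) - B(x_2)\bigr) \cdot (x_1 - x_2) \ge 0$. Substituting these two facts into the equation satisfied by $u$ produces $\partial_t F \ge 0$ at any interior maximum, which is the infinitesimal expression of the desired monotonicity of $\sup F(\cdot, \cdot, t)$ in $t$.

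To make this rigorous when the supremum is not attained, I would work with the penalized functional $\tilde F_\epsilon := F - \epsilon \psi(x_1, x_2) - K\epsilon(T - t)$, with $\psi(x_1, x_2) := \sqrt{1 + |x_1|^2} + \sqrt{1 + |x_2|^2}$ coercive and with bounded first and second derivatives, and $K > 0$ to be chosen depending only on $d$ and $\|B\|_\infty$. Since $\phi^1, \phi^2$ are bounded, $\psi$ forces $\tilde F_\epsilon \to -\infty$ as $|x_1| + |x_2| \to \infty$, so the supremum of $\tilde F_\epsilon$ over $\R^{2d} \times [0, T]$ is attained at some $(x_1^*, x_2^*, t^*)$. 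Repeating the computation above with the penalty-induced $O(\epsilon)$ corrections in the Hessian bound and in the drift term yields $\partial_t \tilde F_\epsilon(x_1^*, x_2^*, t^*) \ge \bigl(K - C(d, \|B\|_\infty)\bigr)\epsilon$, which is strictly positive once $K$ is large enough. If $t^* < T$ this would contradict the maximum condition $\partial_t^+ \tilde F_\epsilon \le 0$, so necessarily $t^* = T$; the terminal bound then forces $\tilde F_\epsilon \le 0$ globally, and sending $\epsilon \to 0$ at each fixed $(x_1, x_2, t)$ delivers $F \le 0$ throughout, in particular at $t = 0$.

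The step I expect to be the main obstacle is correctly identifying the doubling test: recognizing that testing the Hessian inequality against the \emph{diagonal} direction $(\xi, \xi)$, rather than taking the full trace or using arbitrary directions, is exactly what annihilates the uncontrolled $h''$-contribution and reduces the comparison to the monotonicity of $B$ alone. Once this core observation is in place, the penalization and the parabolic maximum principle proceed by routine estimates, with the boundedness of $B$ ensuring that all penalty-induced corrections are of order $\epsilon$ and can be absorbed into the time-linear penalty.
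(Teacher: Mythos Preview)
Your proposal is correct and follows essentially the same route as the paper: the key diagonal test $(\xi,\xi)$ on the Hessian of $F$ is exactly the paper's observation that $z\mapsto F(\bar x_1+z,\bar x_2+z,\bar t)$ has a maximum at $z=0$, and your penalization scheme differs only cosmetically from the paper's choice $\varphi^j-\delta(T-t)-\epsilon\,\mathrm{e}^{-t}|x_j|^2$. One small point you should make explicit: the statement assumes only that $h$ is continuous and nondecreasing, so before writing $\nabla_{x_1}\phi^1=h'(r)(x_1-x_2)/r$ you need (as the paper does) to approximate $h$ from above by a $C^1$ function with $h'(0)=0$, which also disposes of the case $x_1=x_2$.
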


\begin{remark}
While we prove Theorem~\ref{thm:crucialintro} for bounded and smooth
drifts $B$, and solutions $\phi^{1,2}\in C^{2,1}_{\mathrm b}(\Rd\times
[0,T])$, the property clearly carries over to any pointwise limit of
such solutions. We therefore expect it to hold for a much larger class
of monotone drifts $B$ and \GGr{for unbounded} solutions.
\end{remark}



\paragraph{Plan of the paper}

In section 2, we collect some tools useful to our arguments: we
present a slightly refined version of Kantorovich duality, an
approximation technique of the cost functional, the construction of a
smooth and bounded approximation of the operator $B$, and a rescaling
trick which allows to consider $\lambda=0$ in the following arguments.
Section 3 is devoted to the proof of Theorem \ref{thm:crucialintro},
the last Section contains the proof of Theorem \ref{teor:main}.  

\section{Preliminaries} 

  In this section we collect some preliminary and technical
  regularization results which will turn to be useful in the sequel.

\subsection{{$C^\infty_{\rmc}(\Rd)$ functions in} Kantorovich duality}

  Let us first show that we can assume $\phiu,\phid$ are smooth and
  compactly supported in the duality formula \eqref{dualintro}.

\begin{prop}\label{Kant+}
If the cost function $h$ is Lipschitz continuous and satisfies 
$  \lim_{r\uparrow+\infty}h(r)=+\infty,$
then
\begin{equation} \label{Kdual[II]}
  \begin{aligned}
    \Cost h{\mu^1}{\mu^2}=\sup\Big\{& \int_\Rd \phiu\,\d\mu^1+
    \int_\Rd \phid\,\d\mu^2:\\&\phiu,\phid\in C^\infty_{\rmc}(\Rd),\
    \phiu(\xu)+\phid(\xd)\le h(|\xu-\xd|)\Big\}.
  \end{aligned}
\end{equation}
\end{prop}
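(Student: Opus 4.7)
My plan is to complement the classical Kantorovich duality (see e.g.\ \cite{Villani03}), which identifies $\Cost h{\mu^1}{\mu^2}$ with the supremum of $\Sigma(\phiu,\phid):=\int\phiu\,\d\mu^1+\int\phid\,\d\mu^2$ over admissible pairs in $C_{\rmb}(\Rd)^2$. Since $C^\infty_{\rmc}(\Rd)\subset C_{\rmb}(\Rd)$ the inequality ``$\le$'' in \eqref{Kdual[II]} is automatic, and it only remains to show that every admissible $(\phiu,\phid)\in C_{\rmb}(\Rd)^2$ can be approximated, with arbitrarily small loss in $\Sigma$, by an admissible pair in $C^\infty_{\rmc}(\Rd)^2$.

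First I would regularize via the $h$-transform: replace $\phid$ by $\phiu^h(x_2):=\inf_{x_1}[h(|x_1-x_2|)-\phiu(x_1)]$ and then $\phiu$ by $\phiu^{hh}$. This does not decrease $\Sigma$ by standard $c$-duality arguments, and since $h$ is $L$-Lipschitz it produces an $L$-Lipschitz bounded admissible pair with $\phid=\phiu^h$. A shift $\phiu\mapsto\phiu-\sup\phiu$, $\phid\mapsto\phid+\sup\phiu$ preserves $\Sigma$ (the $\mu^j$ are probability measures) and the $h$-transform relation, yielding $\phiu\le 0$ on $\Rd$ and therefore $\phid=\phiu^h\ge 0$ as well, since the integrand $h(|x_1-\cdot|)-\phiu(x_1)$ is then pointwise non-negative.

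The main obstacle is obtaining compact supports without breaking admissibility, because a naive multiplicative cutoff $\chi_R\phi^j$ applied to both functions fails in the annulus $R<|x|<2R$. My plan is an asymmetric ``truncate-then-reconjugate'' trick: picking a non-decreasing family of smooth cutoffs $\chi_R\equiv 1$ on $B_R$ with $\supp\chi_R\subset B_{2R}$ and values in $[0,1]$, I set $\phiu_R:=\chi_R\phiu\in[\phiu,0]$, a Lipschitz function supported in $\bar B_{2R}$, and then \emph{re-define} $\phid_R:=(\phiu_R)^h$. The pair $(\phiu_R,\phid_R)$ is then admissible by construction; moreover, for $|y|>2R$ the choice $x_1=y$ in the defining infimum gives $\phid_R(y)\le h(0)-\phiu_R(y)=0$, while the pointwise non-negativity of $h(|x_1-y|)-\phiu_R(x_1)$ forces $\phid_R(y)\ge 0$, so $\phid_R$ is automatically supported in $\bar B_{2R}$. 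As $R\to+\infty$, the monotonicity $\phiu_R\downarrow\phiu$ combined with the effective compactness of the infimum (the boundedness of $\phiu$ and the growth $h\to+\infty$ force the infimum to be attained in a ball of radius depending only on $\|\phiu\|_\infty$) gives $\phid_R\uparrow\phid$ pointwise; bounded convergence then yields $\int\phi^j_R\,\d\mu^j\to\int\phi^j\,\d\mu^j$.

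The final smoothing combines a small downward shift with mollification. For $\eps>0$, pick $r_0$ with $h(r_0)\ge\|\phid_R\|_\infty$ and a smooth cutoff $\chi_*\equiv 1$ on $B_{2R+r_0}$, $\supp\chi_*\subset B_{2R+2r_0}$; set $\phi^j_{R,\eps}:=(\phi^j_R-\eps)\chi_*$. A straightforward case-by-case check, using the admissibility of $(\phiu_R,\phid_R)$ together with the fact that $h(|x-y|)\ge h(r_0)\ge\|\phid_R\|_\infty$ whenever $|x-y|\ge r_0$, shows that $(\phi^1_{R,\eps},\phi^2_{R,\eps})$ is continuous, compactly supported in $\bar B_{2R+2r_0}$, admissible for $h$, with an $\eps$-slack in every region where the functions are non-zero. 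Convolving with a mollifier $\eta_\delta$ of support $B_\delta$ for $\delta\le\eps/(2L)$ then produces a $C^\infty_{\rmc}(\Rd)^2$ admissible pair (the Lipschitz continuity of $h$ gives mollification error at most $2L\delta\le\eps$, absorbed by the slack where the pair is non-zero, while elsewhere both mollified terms remain non-positive), whose integrals are within $O(\eps)$ of $\int\phi^j_R\,\d\mu^j$. Sending $R\to+\infty$ and then $\eps,\delta\to 0$ closes the reverse inequality in \eqref{Kdual[II]}.
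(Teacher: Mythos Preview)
Your argument is correct and follows the same overall outline as the paper—Kantorovich duality, $h$-conjugation to obtain a Lipschitz pair, a sign normalisation, compactification of the supports, and mollification—but the implementation of the last two steps is genuinely different. For compact support, you truncate $\phiu$ and \emph{re-conjugate}: with your sign convention $\phiu\le 0$ and the standing assumption $h(0)=0$, the new potential $\phid_R=(\phiu_R)^h$ vanishes automatically outside $\bar B_{2R}$, and the growth hypothesis $h\to\infty$ enters only to show $\phid_R\uparrow\phid$. The paper instead keeps the opposite signs $\phiu\ge 0,\ \phid\le 0$, mollifies \emph{first} (subtracting the uniform correction $\delta_\eta=\sup(\phiu\ast\kappa_\eta-\phiu)^+ +\sup(\phid\ast\kappa_\eta-\phid)^+$, which tends to $0$ by Lipschitz continuity), and only then truncates the two smooth potentials with two different cutoffs $\chi_R$ and $\chi_{4R}$, invoking $h\to\infty$ directly to verify admissibility in the far field. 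Your re-conjugation device is arguably the cleaner compactification (one cutoff instead of two with different radii); the paper's ``mollify, then subtract $\delta_\eta$'' is the cleaner smoothing, since it avoids any case analysis. One small remark on your final step: the specific choice $h(r_0)\ge\|\phid_R\|_\infty$ is not what actually drives the argument; what matters is that the buffer $\{\chi_*\equiv 1\}\setminus B_{2R}$ has width at least $2\delta$, so that for every $(x_1,x_2)$ either the convolution integrand carries the full $\eps$-slack (when $|x_2-z_2|\le 2R+r_0$) or $\psi^2(x_2)\le 0$ (when $|x_2|>2R+\delta$), and these two regimes together cover $\Rd\times\Rd$.
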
 
\begin{proof}
  
    Let us recall that the $h$-transform of a given bounded function
    $\zeta:\Rd\to\R$ is defined as
    \begin{equation}
      \label{eq:22}
      \zeta^h(x):=\inf_{y\in \Rd}h(|x-y|)-\zeta(y),
    \end{equation}
    and it is a bounded and Lipschitz continuous function satisfying $\zeta(x)+\zeta^h(y)\le h(|x-y|)$.
    
    Let us fix $c<\Cost h {\mu^1}{\mu^2}$ and admissible $\phiu,\phid\in C_{\rmb}(\Rd)$ such that
    \begin{equation}
      \label{eq:24}
      \int_\Rd \phiu\,\d\mu^1+
    \int_\Rd \phid\,\d\mu^2>c.
    \end{equation}
    By possibly replacing $\phid$ with $(\phiu)^{h}\ge \phid$
    and $\phiu$ with $(\phiu)^{hh}\ge \phiu$, it is not restrictive to assume that
    $\phiu,\phid$ are also Lipschitz continuous. Adding to $\phiu$ and subtracting from $\phid$
    a suitable constant, we can also assume that $\phiu\ge0$ and $\phid\le 0$.

Let us now consider a  family of mollifiers

  $\kappa_\eta$ and of cutoff functions $\nchi_R$ defined by
  \begin{subequations}
    \begin{gather}
      \label{eq:32}
      \kappa_\eta(x):=\eta^{-d}\kappa\big(x/\eta\big),\quad
      \nchi_R(x):=\nchi(x/R)\quad x\in\Rd,\
      \eta, R>0,
      \intertext{where $\kappa,\nchi\in C^\infty_{\rmc}(\Rd)$ satisfy}
      \label{eq:69}
      \kappa\ge 0,\quad \int_{\Rd} \kappa(x)\, \d x =1,\qquad
      0\le \nchi\le1,
      \quad
      \nchi(x)=0\text{ if }|x|\ge1,
      \quad\nchi(x)=1\text{ if }|x|\le 1/2.
    \end{gather}
  \end{subequations}

 \meta{Symmetry and decreasing property of $\kappa$ are relevant just in dimension 1\\
   and could be confusing at this point since they are not required in the lemma.\\
   I separated
   the role of the mollifiers and of the cut-off\\ and
   I re-stated the similar assumptions later on\marginpar{I fully agree!}}
 
  \bigskip
We set $\phiu_\eta:=\phiu\ast \kappa_\eta$ and 
$\phid_\eta:=\phid\ast \kappa_\eta-\delta_\eta$, where
\begin{displaymath}
 \delta_\eta:=\sup(\phiu\ast\kappa_\eta-\phiu)^+
 +\sup(\phid\ast\kappa_\eta -\phid)^+.
  \end{displaymath}
  The definition of $\delta_\eta$ yields
  \begin{displaymath}
    \phiu_\eta(\xu)+\phid_\eta(\xd)\le 
    \phiu\ast\kappa_\eta(\xu)-\phiu(\xu)+\phid\ast\kappa_\eta(\xd)-\phid(\xd)-\delta_\eta+h(|\xu-\xd|)
    \le h(|\xu-\xd|).
  \end{displaymath}
  Moreover, since $\phiu,\phid$ are Lipschitz, $\phiu_\eta$ and $\phid_\eta$ converge to $\phiu,\phid$
  uniformly as $\eta\downarrow0$, so that
  $\phiu_\eta$ and $\phid_\eta$ are a smooth admissible {pair} \Killed{couple} still satisfying \eqref{eq:24}
  and the sign condition $\phiu_\eta\ge0,\ \phid_\eta\le 0$.
  
  Let us now choose $R_0>0$ such that
  \begin{equation}
    \label{eq:25}
    h(r)\ge \sup \phiu_\eta\quad\text{for every $r\ge R_0$}
  \end{equation}
  {Setting $\phiu_{\eta,R}:=\phiu_\eta \, \nchi_R\le \phiu_\eta$} we easily have for $R\ge R_0$
  \begin{displaymath}
    \inf_{\xu\in \Rd} h(|\xu-\xd|)-\phiu_{\eta,R}(\xu)\ge 0\quad\text{if }|\xd|\ge 2R\ge R+R_0.
  \end{displaymath}
    Since $\phid_{\eta,4R}:=\phid_\eta\,\nchi_{4R}$ satisfies
    $\phid_{\eta,4R}(x_2)=\phid_\eta(x_2)$ if $|x_2|\le 2R$
    and $\phid_{\eta,4R}(x_2)\le 0$ for every $x_2\in \Rd,$
  
  it follows that $\phiu_{\eta,R},\phid_{\eta,4R}$
  is an admissible couple in
  $C^\infty_\rmc(\Rd)$, and,
  for $R$ sufficiently large, it still satisfies \eqref{eq:24}.
%
\end{proof}
\subsection{Regularization of the cost function.}
\label{subsec:regularization_cost}
In this section we shall show that it is sufficient to consider
nonnegative, Lipschitz, and unbounded costs
(as those considered in Proposition \ref{Kant+}) 
in the proof of Theorem \ref{teor:main}.

\begin{lemma}
  \label{le:cost_approximation}
  If \eqref{++} holds for every nonnegative Lipschitz and
  nondecreasing cost function $h$
  with $\lim_{r\up+\infty}h(r)=+\infty$, then it holds for every 
  continuous and nondecreasing cost $h$.
\end{lemma}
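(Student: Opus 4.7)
The plan is to approximate the given continuous, nondecreasing $h$ (with $h(0)=0$) pointwise by Lipschitz, nondecreasing, \emph{unbounded} costs $h_n$ to which the hypothesis applies, and then to pass to the limit $n\to\infty$ in the inequality $\Cost{(h_n)_{\lambda t}}{\mu^1_t}{\mu^2_t}\le\Cost{h_n}{\mu^1_0}{\mu^2_0}$.

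A convenient choice is the lower Moreau--Yosida regularization of $h$ plus a small linear correction:
$$\hat h_n(r):=\inf_{s\ge 0}\bigl(h(s)+n|r-s|\bigr),\qquad h_n(r):=\hat h_n(r)+r/n.$$
Directly from the definitions $\hat h_n$ is $n$-Lipschitz, nondecreasing, $\hat h_n(0)=0$, $\hat h_n\le h$, and, using only continuity of $h$, one checks $\hat h_n\uparrow h$ pointwise (any near-minimizer $s$ must converge to $r$ as $n\to\infty$). The linear term then makes $h_n$ Lipschitz with constant $n+1/n$, still nondecreasing, $h_n(0)=0$, \emph{unbounded}, and preserves the pointwise convergence $h_n\to h$.

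Applying the hypothesis to each $h_n$ yields $\Cost{(h_n)_{\lambda t}}{\mu^1_t}{\mu^2_t}\le\Cost{h_n}{\mu^1_0}{\mu^2_0}$. For the LHS, since $(h_n)_{\lambda t}(r)\ge\hat h_n(\rme^{\lambda t}r)$, a standard lsc/tightness argument for optimal transport -- take near-optimal plans $\rrho_n$, extract a weak subsequential limit $\rrho$ (still a coupling, as the marginal constraint is weakly closed), bound $\int \hat h_k(\rme^{\lambda t}|x_1-x_2|)\,\d\rrho\le\liminf_n\int \hat h_k(\rme^{\lambda t}|x_1-x_2|)\,\d\rrho_n\le\liminf_n\Cost{(h_n)_{\lambda t}}{\mu^1_t}{\mu^2_t}$ for each fixed $k$, then send $k\to\infty$ via monotone convergence -- gives $\liminf_n\Cost{(h_n)_{\lambda t}}{\mu^1_t}{\mu^2_t}\ge\Cost{h_{\lambda t}}{\mu^1_t}{\mu^2_t}$. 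For the RHS, testing against a near-optimal coupling $\rrho^*$ for $\Cost{h}{\mu^1_0}{\mu^2_0}$ and using $h_n\le h+r/n$ gives
$$\Cost{h_n}{\mu^1_0}{\mu^2_0}\le\Cost{h}{\mu^1_0}{\mu^2_0}+\frac{1}{n}\int|x_1-x_2|\,\d\rrho^*,$$
which converges to $\Cost{h}{\mu^1_0}{\mu^2_0}$ as soon as $\rrho^*$ has finite first moment. Combining the two estimates yields \eqref{++}.

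The main obstacle is precisely this last proviso: when $h$ is bounded and the marginals lack finite first moment, no coupling integrates $|x_1-x_2|$ and the additive correction $r/n$ is not directly controllable. I would handle this residual case by a separate truncation of $\mu^1_0,\mu^2_0$ by compactly supported probabilities, exploiting the superposition principle of the linear Fokker--Planck equation \eqref{eq:1} to reduce to marginals with finite first moment and commute that truncation with the limit in $n$.
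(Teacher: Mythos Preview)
Your overall structure---Moreau--Yosida regularization $\hat h_n$ for the Lipschitz step, then a perturbation to force unboundedness, then a tightness/lsc argument on the left-hand side---matches the paper's, and your lsc argument is essentially the paper's Lemma~\ref{le:stability}. The real issue is the one you flag yourself: the linear correction $r/n$ forces you to control $\int |x_1-x_2|\,\d\rrho^*$, which may well be infinite when $h$ is bounded and the initial measures lack a first moment.

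Your proposed fix---truncating $\mu^1_0,\mu^2_0$ to compactly supported measures and invoking a superposition principle---does not close the gap. Truncating the initial data produces \emph{different} solutions $\mu^{i,k}_t$, and to pass to the limit in $k$ you would need $\mu^{i,k}_t\to\mu^i_t$ (weakly, say). But stability and uniqueness of distributional solutions to \eqref{eq:2} are exactly what Theorem~\ref{teor:main} is meant to establish, so the argument is circular. The superposition principle for linear Fokker--Planck equations represents a given solution as a mixture of path laws; it does not by itself give you continuity of the solution map in the initial datum without already knowing uniqueness.

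The paper avoids the difficulty altogether by replacing your linear term $r/n$ with $\eps\,g(r)$, where $g$ is Lipschitz, unbounded, \emph{concave}, with $g(0)=0$, and tailored to the initial data so that $G:=\int_{\Rd} g(|x|)\,\d(\mu^1_0+\mu^2_0)<\infty$. Such a $g$ exists for \emph{any} probability measures: one builds it piecewise linearly from a sequence $r_n\uparrow\infty$ along which the tail $(\mu^1_0+\mu^2_0)(\{|x|\ge r_n\})$ decays geometrically. Concavity and $g(0)=0$ give $g(|x_1-x_2|)\le g(|x_1|)+g(|x_2|)$, so testing the optimal $h$-coupling yields
\[
\Cost{h+\eps g}{\mu^1_0}{\mu^2_0}\le \Cost{h}{\mu^1_0}{\mu^2_0}+\eps\,G,
\]
and one lets $\eps\downarrow 0$. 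This substitutes your unavailable first-moment bound with a moment that is finite \emph{by construction}, and requires no approximation of the initial data or of the solutions.
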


\begin{proof}
  We first prove that it is sufficient to consider nonnegative
  Lipschitz costs; in a second step, we deal with the asymptotic
  requirement.

  \noindent\underline{Step 1:} \emph{$h$ Lipschitz.}
Adding a suitable constant we can assume that $h(r)\ge h(0)=0$.
We can then approximate $h$ from below by the increasing sequence of nonnegative Lipschitz functions
\begin{equation*}
  h^n(r):=\inf_{s\ge 0} h(s)+n|r-s|
\end{equation*} 
which satisfies
\begin{equation*}
  0=h^n(0)\le h^n(r)\le h(r),\quad
  \lim_{n\uparrow+\infty} h^n(r)=h(r)\quad
  \forall\, r\ge0,
\end{equation*}
the convergence being uniform on each compact interval of $[0,+\infty)$.
Applying Lemma \ref{le:stability} below we find
\begin{displaymath}
  \Cost{h_{\lambda\,t}}{\rho^1_t}{\rho^2_t}\topref{eq:57}=
  \lim_{n\up+\infty}  \Cost{h^n_{\lambda\,t}}{\rho^1_t}{\rho^2_t}
  \topref{++}\le  \liminf_{n\up+\infty}\Cost{h^n}{\rho^1_0}{\rho^2_0}\topref{eq:57}=
  \Cost{h}{\rho^1_0}{\rho^2_0}.
\end{displaymath}
\noindent\underline{Step 2:} \emph{$\lim_{r\up+\infty} h(r)=+\infty$.}
{Let us set $\rho_0:=\rho^1_0+\rho^2_0$,}
let us introduce the function
\begin{displaymath}
  m(r):=\rho_0(\Rd\setminus r\, U),\quad
  U:=\big\{x\in \Rd:|x|< 1\big\},
\end{displaymath}
and let us consider a sequence $r_n$ in $[0,+\infty)$ such that 
\begin{displaymath}
  r_0:=0, \quad
  r_1:=1,\quad r_{n+1}-r_n\ge r_n-r_{n-1}\quad\text{and }m(r_{n+1})\le 2^{-n}.
\end{displaymath}
It is easy to check that $r_n$ is a diverging increasing sequence; if
$g$ is the piecewise linear function satisfying $g(r_n)=n$, i.e.
\begin{displaymath}
  g(r):= n+\frac{r-r_n}{r_{n+1}-r_n}\quad \text{if }r\in [r_n,r_{n+1}],
\end{displaymath}
then $g$ is Lipschitz continuous, increasing, unbounded, concave, and
it satisfies {$g(0)=0$} and 
\begin{align*}
  G:=&\int_\Rd g(|x|)\,\d\rho_0(x)=
    \int_\Rd\Big(\int_0^{|x|}g'(r)\,\d r\Big) \,\d\rho_0(x)=
    \int_\Rd\Big(\int_0^{+\infty} g'(r)\mathds 1_{r\le |x|} \,\d r\Big) \,\d\rho_0(x)  
  \\=&
  \int_0^{\infty} g'(r)\, m(r)\,\d r=\sum_{n=1}^{+\infty}\frac{1}{
    r_n-r_{n-1}}\int_{r_{n-1}}^{r_{n}}m(r)\,\d r\le \sum_{n=0}^{+\infty} m(r_n)<+\infty.
\end{align*}
We can thus consider the perturbed cost
\begin{equation*}
  h^{\eps}(r):=h(r)+\eps\,  g(r)
\end{equation*} 
which is Lipschitz, increasing, unbounded. Since $g$ is concave,
increasing, {and $g(0)=0$}, we
have
\begin{equation}
  \label{eq:54}
  g(|\xu-\xd|)\le g(|\xu|+|\xd|)\le g(|\xu|)+g(|\xd|)\quad\text{for
    every }\xu,\xd\in \Rd,
\end{equation}
so that if {$\rrho_0$} \Killed{$\rrho$}
is an optimal coupling between {$\rho^1_0$ and $\rho^2_0$}
\Killed{$\rho^1$ and $\rho^2$}
for the cost $h$ (we can assume that the initial cost is finite), then
\begin{align*}
  \Cost h{\rho^1_0}{\rho^2_0}&\le \Cost{h^\eps}{\rho^1_0}{\rho^2_0}\le
  \Cost h{\rho^1_0}{\rho^2_0}+\eps\int_{\Rd\times\Rd}
  g(|\xu-\xd|)\,\d\rrho_0(\xu,\xd)\\&\topref{eq:54}\le
  \Cost h{\rho^1_0}{\rho^2_0}+\eps\int_{\Rd\times\Rd}
  \Big(g(|\xu|)+g(|\xd|)\Big)\,\d\rrho_0(\xu,\xd)=
  \Cost h{\rho^1_0}{\rho^2_0}+\eps G.  
\end{align*}
Therefore, if Theorem \ref{teor:main} holds for $h^\eps$ we have
\begin{displaymath}
  \Cost h{\rho^1_t}{\rho^2_t}\le
  \Cost {h^\eps}{\rho^1_t}{\rho^2_t}\le
  \Cost {h^\eps}{\rho^1_0}{\rho^2_0}\le
 \Cost h{\rho^1_0}{\rho^2_0}+\eps G.  
\end{displaymath}
Passing to the limit as $\eps\down0$ we conclude.
\end{proof}
The following result provides a variant of well known stability properties
of transportation costs (see \cite[Theorem 3]{Schachermayer-Teichmann09}, \cite[Theorem 5.20]{Villani09})
and holds the much more general setting of optimal transportation in Radon metric spaces
\cite[Chapter 6]{Ambrosio-Gigli-Savare08}.
\begin{lemma}[Lower semicontinuity of the cost functional w.r.t.\ local uniform convergence of $h$]
  \label{le:stability} \ \\
  Let $h:[0,+\infty)\to [0,+\infty)$ be a continuous cost function and let $h^n:[0,+\infty)\to [0,+\infty)$
  be a sequence of lower semicontinuous functions converging to $h$ locally uniformly in $[0,+\infty)$.
  For every couple $\rho^1,\rho^2\in \PP(\Rd)$ we have
  \begin{equation}
    \label{eq:56}
    \liminf_{n\up+\infty}\Cost{h^n}{\rho^1}{\rho^2}\ge \Cost{h}{\rho^1}{\rho^2}.
  \end{equation}
  In particular, if $h^n\le h$ for every $n\in \N$ then 
  \begin{equation}
    \label{eq:57}
    \lim_{n\to+\infty}\Cost{h^n}{\rho^1}{\rho^2}=\Cost h{\rho^1}{\rho^2}.
  \end{equation}
\end{lemma}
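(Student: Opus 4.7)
The plan is to use a standard direct method: extract narrow limits of (near-)optimal couplings for $h^n$ and exploit the local uniform convergence via a cutoff. To this end, for each $n$ choose a coupling $\rrho_n\in\PP(\Rd\times\Rd)$ between $\rho^1$ and $\rho^2$ which is either optimal or $1/n$-optimal for the cost $h^n$ (existence is not essential, but optimizers do exist since $h^n\ge0$ is l.s.c.\ and the set of couplings is narrowly compact). After passing to a subsequence, assume $\int h^n(|x_1-x_2|)\,\d\rrho_n$ converges to $\liminf_n \Cost{h^n}{\rho^1}{\rho^2}$; if this liminf is $+\infty$ there is nothing to prove. Since the marginals are fixed Radon probabilities, they are tight, and tightness transfers to the family of couplings, so by Prokhorov a further subsequence satisfies $\rrho_n\weakto\rrho$ narrowly, with $\rrho$ still a coupling of $\rho^1$ and $\rho^2$.

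Next, fix $R>0$ and a continuous cutoff $\phi_R:[0,+\infty)\to[0,1]$ with $\phi_R\equiv 1$ on $[0,R]$ and $\supp\phi_R\subset [0,2R]$. For any $\eta>0$, local uniform convergence on $[0,2R]$ gives $h^n(r)\ge h(r)-\eta$ on this interval for all large $n$, hence, using $h^n\ge0$ and $\phi_R\le 1$,
\begin{equation*}
  \int h^n(|x_1-x_2|)\,\d\rrho_n\ge \int \phi_R(|x_1-x_2|)\,\bigl(h(|x_1-x_2|)-\eta\bigr)\,\d\rrho_n.
\end{equation*}
The right-hand integrand is continuous and bounded on $\Rd\times\Rd$, so narrow convergence yields
\begin{equation*}
  \liminf_n \int h^n(|x_1-x_2|)\,\d\rrho_n\ge \int \phi_R(|x_1-x_2|)\,\bigl(h(|x_1-x_2|)-\eta\bigr)\,\d\rrho.
\end{equation*}
Sending $\eta\downarrow 0$ and then $R\uparrow +\infty$, monotone convergence (using $h\ge 0$ after adding the constant $h(0)$, and $\phi_R\uparrow 1$) gives $\liminf_n \int h^n\,\d\rrho_n \ge \int h(|x_1-x_2|)\,\d\rrho \ge \Cost{h}{\rho^1}{\rho^2}$, proving \eqref{eq:56}. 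For \eqref{eq:57}, the assumption $h^n\le h$ immediately yields $\Cost{h^n}{\rho^1}{\rho^2}\le \Cost{h}{\rho^1}{\rho^2}$, hence $\limsup_n \Cost{h^n}{\rho^1}{\rho^2}\le \Cost{h}{\rho^1}{\rho^2}$, and combined with \eqref{eq:56} this gives the desired equality.

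The only delicate point is the simultaneous passage to the limit in the varying integrands $h^n$ and the varying measures $\rrho_n$: the cutoff $\phi_R$ reduces matters to a continuous bounded test function on which narrow convergence applies, the uniform error $\eta$ is absorbed as $n\to\infty$, and the tails beyond $|x_1-x_2|\ge R$ are harmless thanks to nonnegativity of $h^n$. If one prefers to avoid the additive constant $h(0)$, the same argument applies after the (already standard) reduction $h\ge h(0)=0$ obtained by translation.
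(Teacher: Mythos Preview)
Your proof is correct and follows essentially the same approach as the paper: extract a narrowly convergent subsequence of optimal couplings via tightness of the marginals, then pass to the limit in the cost integral. The only difference is in the final step: the paper pushes forward via $H^n$ to measures on $[0,+\infty)$ and invokes lemmas from \cite{Ambrosio-Gigli-Savare08} on weak convergence of pushforwards and lower semicontinuity of $\int z\,\d\mu$, whereas you handle the varying integrands directly with a cutoff $\phi_R$ and monotone convergence; your route is slightly more self-contained. (Minor remark: since the lemma already assumes $h\ge 0$, the parenthetical about ``adding the constant $h(0)$'' is unnecessary.)
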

\begin{proof}
  Let us set $H^n(\xu,\xd):=h^n(|\xu-\xd|)$ and observe that $H^n$ converges to
  $H(\xu,\xd):=h^n(|\xu-\xd|)$ uniformly on compact sets of $\Rd\times\Rd$.
  If $\rrho_n\in \PP(\Rd\times\Rd)$ is an optimal coupling between $\rho^1,\rho^2$
  with respect to the cost $h^n$ then
  \begin{displaymath}
    \Cost{h^n}{\rho^1}{\rho^2}=
    \int_{[0,+\infty)} z\,\d \mu_n(z),\quad\text{where } \mu_n=(H^n)_\# \rrho_n.
  \end{displaymath}
  Since the marginals of $\rrho_n$ are fixed, the sequence $(\rrho_n)_{n\in \N}$ is tight and up to the
  extraction of a suitable subsequence (still denoted by $\rrho_n$)
  we can suppose that $\rrho_n$ converge to 
  to some limit coupling $\rrho$ between $\rho^1,\rho^2$ in $\PP(\Rd\times\Rd)$.
  Since $\mu_n$ weakly converge to $\mu=H_\# \rrho$ by
  \cite[Lemma 5.2.1]{Ambrosio-Gigli-Savare08}, standard lower semicontinuity
  of integrals with nonnegative continuous integrands \cite[Lemma 5.1.7]{Ambrosio-Gigli-Savare08}
  yields
  \begin{displaymath}
    \liminf_{n\to+\infty}  \int_{[0,+\infty)} z\,\d \mu_n(z)\ge
    \int_{[0,+\infty)} z\,\d \mu(z)=\int_{\Rd\times\Rd}H(\xu,\xd)\,\d\rrho(\xu,\xd)\ge \Cost h{\rho^1}{\rho^2}.
  \end{displaymath}
\end{proof}
\subsection{Bounded, smooth approximations of a monotone operator}
\label{subsec:Bregular}
If $A:\Rd\to\Rd$ is a monotone operator then there exists \cite[Corollary 2.1]{Brezis73} a
maximal monotone {multivalued} extension
$\sfA:\Rd\rightrightarrows{\Rd}$ {(thus taking values in $2^{\Rd}$)}
such that  $A(x)\in
\sfA(x)$ for every $x\in \Rd$. We denote by $\sfA^\circ(x)$ the element of
minimal norm in (the closed convex set) $\sfA(x)$.
\Killed{Applying} \cite[Corollary 1.4]{Alberti-Ambrosio99}
{shows that the set $\sfA(x)\subset \Rd$ reduces to the singleton
  $\{A(x)\}$
  $\Leb d$-almost everywhere}: in fact it satisfies
\begin{equation}
  \label{eq:26}
  \sfA(x)=\{\sfA^\circ(x)\}=\{A(x)\}\quad\text{for $\Led$-a.e.\ $x\in \Rd$},\quad
  \sfA(x)=\mathrm{conv}\big\{\lim_{n\to\infty}A(x_n)\ \text{for
    some }x_n\to x\big\}
\end{equation}
We recall the following important approximation result 
\cite[Theorem 4.1]{Fitzpatrick-Phelps92}:
we denote by $U$ the open unit ball in $\Rd$.
\begin{tteor}[Fitzpatrick-Phelps] \label{Fitz}
For every maximal monotone operator $\sfA:\Rd\rightrightarrows\Rd$, there
exists a sequence of maximal monotone operators
$\sfA_n:\Rd\rightrightarrows\Rd$ such that,
for each $x \in \Rd$ and all n,
\begin{equation} \label{Fitz:eq}
  \sfA(x)\cap n\, U \subset \sfA_n(x)\subset n\,\overline U, 
  \quad \sfA_n(x) \setminus \sfA(x) \subset n  \,\partial U\quad\text{for
    every }x\in \Rd.
\end{equation}
\end{tteor}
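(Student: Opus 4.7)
My plan is to construct $\sfA_n$ by penalising the inverse of $\sfA$ with the normal cone of the ball $n\overline U$. Recall that since $\sfA:\Rd\rightrightarrows\Rd$ is maximal monotone, so is its inverse $\sfA^{-1}:\Rd\rightrightarrows\Rd$. Denote by $N_{n\overline U}$ the normal cone operator to $n\overline U$: this is maximal monotone, it vanishes on $nU$, it reduces to the outward radial ray $\{t y:t\ge 0\}$ for $y$ with $|y|=n$, and its domain is $n\overline U$.

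The first step is to apply Rockafellar's sum theorem in finite dimensions to $\sfA^{-1}+N_{n\overline U}$. The required constraint qualification $\mathrm{ri}(\mathrm{dom}\,\sfA^{-1})\cap nU\neq\emptyset$ holds as soon as the range of $\sfA$ meets $nU$, which is automatic for every $n$ large enough; smaller values of $n$ are either vacuous or can be handled by a preliminary translation of $\sfA$ so that the origin lies in the relative interior of its range. This produces a maximal monotone operator $\sfA^{-1}+N_{n\overline U}$, and accordingly
\begin{equation*}
  \sfA_n:=\bigl(\sfA^{-1}+N_{n\overline U}\bigr)^{-1}
\end{equation*}
is itself maximal monotone.

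The second step is to unpack the graph of $\sfA_n$ by a case analysis on $|y|$. Since $N_{n\overline U}(y)=\emptyset$ for $|y|>n$, the range of $\sfA_n$ is contained in $n\overline U$; for $|y|<n$ the normal cone collapses to $\{0\}$ and one recovers the original graph of $\sfA$; for $|y|=n$ one has to shift $x$ along the outward ray. One obtains
\begin{equation*}
  \sfA_n(x)=\bigl\{y\in\sfA(x):|y|<n\bigr\}\cup\bigl\{y\in n\partial U: y\in\sfA(x-ty)\text{ for some }t\ge 0\bigr\}.
\end{equation*}
From this explicit description the three inclusions in \eqref{Fitz:eq} are transparent: the first summand equals $\sfA(x)\cap nU$, both summands are contained in $n\overline U$, and any element of $\sfA_n(x)\setminus\sfA(x)$ necessarily comes from the second summand and therefore satisfies $|y|=n$.

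The deep ingredient, and the main obstacle in the argument, is the appeal to Rockafellar's sum theorem together with the verification of a constraint qualification needed to guarantee maximal monotonicity of $\sfA^{-1}+N_{n\overline U}$; once that is granted, every remaining step is a formal manipulation of graphs and of the definition of the normal cone.
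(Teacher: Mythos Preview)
The paper does not prove this theorem: it simply quotes it from \cite[Theorem~4.1]{Fitzpatrick-Phelps92}. Your construction $\sfA_n=(\sfA^{-1}+N_{n\overline U})^{-1}$ is in fact the Fitzpatrick--Phelps construction specialised to $\Rd$, and your explicit description of the graph and the verification of the three inclusions in \eqref{Fitz:eq} are correct.

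The weak point is the maximal monotonicity of $\sfA^{-1}+N_{n\overline U}$. Rockafellar's sum theorem requires $\mathrm{ri}(\mathrm{dom}\,\sfA^{-1})\cap nU\neq\emptyset$, i.e.\ $\mathrm{ri}(\mathrm{range}\,\sfA)\cap nU\neq\emptyset$, and this can genuinely fail for small $n$ (take $\sfA(x)\equiv y_0$ with $|y_0|>1$ and $n=1$). Your proposed cure, ``a preliminary translation of $\sfA$ so that the origin lies in the relative interior of its range'', does not work: replacing $\sfA$ by $\sfA-y_0$ and running the construction yields an operator whose range sits in $y_0+n\overline U$ rather than $n\overline U$, so after translating back the inclusions \eqref{Fitz:eq} hold for the shifted ball, not for $nU$. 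This is a real gap, not a cosmetic one.

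Two clean fixes are available. First, for those $n$ with $\mathrm{range}(\sfA)\cap n\overline U=\emptyset$ the first inclusion in \eqref{Fitz:eq} is vacuous and one may simply take $\sfA_n$ to be the constant operator equal to any fixed point of $n\partial U$; the remaining borderline case, where the range meets $n\overline U$ only on its boundary, still needs an argument beyond Rockafellar. Second, and more robustly, one can bypass the sum theorem altogether and prove maximality of $(\sfA^{-1}+N_{n\overline U})^{-1}$ directly: given $(x_0,y_0)$ monotonically related to its graph, apply Minty's surjectivity theorem to $\sfA$ to produce $(x,y)$ with $y\in\sfA(x)$ and $x+y=x_0+P_{n\overline U}\,y_0$, where $P_{n\overline U}$ is the metric projection; a short computation using the characterisation of $P_{n\overline U}$ then forces $y_0\in\sfA_n(x_0)$. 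This is essentially how Fitzpatrick and Phelps argue in the Banach-space setting, and it works for every $n$ without any constraint qualification.
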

Notice that \eqref{Fitz:eq} yields in particular
\begin{equation}
  \label{eq:27}
  |\sfA_n^\circ(x)|= \min\big(|\sfA^\circ(x)|,n\big)\quad\text{for every }x\in \Rd.
\end{equation}
\begin{teor} \label{approxB}
  Let $\sfA:\Rd\rightrightarrows\Rd$ be a maximal monotone operator
  and
  $(\beta_n)_{n\in \N}$ a vanishing sequence of positive real numbers.
  There exists a sequence of smooth, globally Lipschitz, and bounded monotone operators
  $A_n:\Rd\to\Rd$ such that 
\begin{equation} \label{B approx}
  \mathrm{Lip}(A_n)\le n,\quad
  |A_n(x)|\le \min\big(|\sfA^\circ(x)|,n\big)+\beta_n,\quad
  \lim_{n\to+\infty}A_n(x)=\sfA^\circ(x)\quad
  \mbox{for every } x\in \Rd.
\end{equation}
\end{teor}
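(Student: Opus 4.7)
The plan is a three-stage regularization: first apply Theorem \ref{Fitz} to bound the operator, then a Yosida approximation to make it single-valued and Lipschitz, and finally convolve with a mollifier to gain smoothness. Stage one: let $\sfA_n$ be the bounded maximal monotone extensions provided by Theorem \ref{Fitz}. Stage two: let $B_n := (\sfA_n)_{1/n}$ be the Yosida approximation of $\sfA_n$ at parameter $1/n$; by classical maximal monotone theory, $B_n : \Rd \to \Rd$ is single-valued, monotone, and globally $n$-Lipschitz, with $|B_n(x)| \le |\sfA_n^\circ(x)| = \min(|\sfA^\circ(x)|, n)$ by \eqref{eq:27}. Stage three: fix a nonnegative $\kappa \in C^\infty_\rmc(\Rd)$ with unit mass and support in $\overline U$, and set $A_n := B_n \ast \kappa_{\eps_n}$ with $\eps_n := \beta_n / n$, using the rescaling convention of \eqref{eq:32}.

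Smoothness of $A_n$ is immediate, and convolution with a nonnegative kernel of unit mass preserves the $n$-Lipschitz bound and preserves monotonicity via the identity
\[
  \langle A_n(x) - A_n(y),\, x - y\rangle = \int_{\Rd} \langle B_n(x-z) - B_n(y-z),\, (x-z) - (y-z)\rangle\, \kappa_{\eps_n}(z)\,\d z \ge 0.
\]
The required norm bound follows from the $n$-Lipschitz property of $B_n$: $|A_n(x) - B_n(x)| \le n\eps_n = \beta_n$, hence $|A_n(x)| \le \min(|\sfA^\circ(x)|,n) + \beta_n$.

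It remains to establish the pointwise convergence $A_n(x) \to \sfA^\circ(x)$, which reduces to $B_n(x) \to \sfA^\circ(x)$ since $|A_n(x) - B_n(x)| \le \beta_n \to 0$. Writing $B_n(x) = n(x - y_n)$ with $y_n := (I + (1/n)\sfA_n)^{-1}(x)$, one has $|x - y_n| = |B_n(x)|/n \le |\sfA^\circ(x)|/n \to 0$, so $y_n \to x$. For $n > |\sfA^\circ(x)|$, the inclusion $B_n(x) \in \sfA_n(y_n)$ together with $|B_n(x)| < n$ and the second part of \eqref{Fitz:eq} forces $B_n(x) \in \sfA(y_n)$. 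Closedness of the graph of the maximal monotone $\sfA$ then ensures that every cluster point $v$ of the bounded sequence $(B_n(x))_n$ belongs to $\sfA(x)$ with $|v| \le |\sfA^\circ(x)|$, and the uniqueness of the minimum-norm element in the closed convex set $\sfA(x)$ forces $v = \sfA^\circ(x)$, so the full sequence converges. This joint convergence is the main obstacle, since both the truncation scale and the Yosida parameter vary simultaneously; it is precisely the Fitzpatrick--Phelps property $\sfA_n \setminus \sfA \subset n\,\partial U$ that keeps $B_n(x)$ inside the graph of $\sfA$ itself for large $n$ and makes the limit tractable.
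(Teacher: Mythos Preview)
Your proof is correct and follows essentially the same three-stage construction as the paper: Fitzpatrick--Phelps truncation, Yosida approximation at scale $1/n$, and mollification at a scale chosen so that the $n$-Lipschitz bound converts into the $\beta_n$ error. The only cosmetic differences are that you take the mollifier supported in $\overline U$ (so that $\int |z|\kappa_{\eps_n}(z)\,\d z\le \eps_n$ directly) and you spell out the preservation of monotonicity under convolution, which the paper leaves implicit.
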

\begin{proof}
Let $\sfA_n$ be a sequence of maximal monotone operators satisfying
\eqref{Fitz:eq}
and let $Y_n:\Rd\to\Rd$ be the Moreau-Yosida approximation of $\sfA_n$ of
parameter $n^{-1}$ \cite[Proposition 2.6]{Brezis73}
\begin{displaymath}
  Y_n(x):=n\Big(x-(I+n^{-1}\,\sfA_n)^{-1} x\Big)
\end{displaymath}
Note that $Y_n$ is a $n$-Lipschitz monotone map satisfying
\begin{equation}
  \label{eq:28}
  |Y_n(x)|\le
  |\sfA_n^\circ(x)|\topref{eq:27}=\min\big(|\sfA^\circ(x)|,n\big)\quad\text{for every
  }x\in \Rd
\end{equation}
Let us fix $x\in \Rd$ and let $x_n\in \Rd$ be the unique solution of
\begin{equation}
  \label{eq:29}
  x_n+n^{-1}\,\sfA_n(x_n)\ni x\quad\text{so that}\quad
  Y_n(x)=n(x-x_n)\in \sfA_n(x_n).
\end{equation}
If $n>|\sfA^\circ(x)|$ then \eqref{eq:28} yields $Y_n(x)\notin n\,\partial
U$;
applying \eqref{Fitz:eq} and \eqref{eq:28} again we get
\begin{equation}
  \label{eq:30}
  Y_n(x)\in \sfA(x_n),\quad
  |Y_n(x)|\le |\sfA^\circ(x)|,\quad
  |x-x_n|\le n^{-1}\,|\sfA^\circ(x)|\quad\text{for every }n>|\sfA^\circ(x)|.
\end{equation}
Since the graph of $\sfA$ is closed, any accumulation point $y$ of the
bounded sequence $Y_n(x)$ satisfies
\begin{equation}
  \label{eq:31}
  y\in \sfA(x),\quad |y|\le |\sfA^\circ(x)|.
\end{equation}
We thus conclude that $\lim_{n\up+\infty}Y_n(x)=\sfA^\circ(x)$ for
every $x\in \Rd$.

To conclude the proof we need to regularize $Y_n$: to this aim we
consider the family of mollifiers $\kappa_\eta$ as in \eqref{eq:32}
and we set
\begin{equation}
  \label{eq:33}
  A_n:=Y_n\ast \kappa_\eta \quad\text{with}\quad
  \eta:= (n\, k)^{-1}\beta_n \text{ where } k:=\int_\Rd
  |x|\kappa(x)\,\d x,
\end{equation}
so that
\begin{displaymath}
  |A_n(x)-Y_n(x)|\le \eta\,k\, \mathrm{Lip}(Y_n)\le n\,\eta\,k\le \beta_n.
\end{displaymath}
\end{proof}

We consider now a radial smoothing:
\begin{prop}
  \label{approxBn}
  Let $A_n:\Rd\to\Rd$ be smooth, Lipschitz, and bounded monotone
  operators satisfying
  \eqref{B approx}.
  For every $m\in\N$ there exists bounded, smooth,
  Lipschitz, and monotone operators $A_{n,m}$ such that
 \begin{gather}
    \label{eq:34}
    \mathrm{Lip}(A_{n,m})\le n,\quad
    \sup_{x\in \Rd}|A_{n,m}(x)|\le n+\beta_n,\quad
    \sup_{x\in\Rd} \big|\D\,A_{n,m}(x)\cdot x\big|\le 2m\,(n+\beta_n)\\
    \label{eq:62}
    \lim_{m\up+\infty}A_{n,m}(x)=A_n(x)\quad\text{for every }x\in \Rd.
  \end{gather}
\end{prop}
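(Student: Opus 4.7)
The plan is to define $A_{n,m}$ as a mollification of $A_n$ in the \emph{radial scaling variable}: fix $\kappa\in C^\infty_{\rmc}(\R)$ symmetric, nonnegative, with $\int\kappa=1$ and $\supp\kappa\subset[-1,1]$, and set
\begin{equation*}
\eta_m(s):=m\,\kappa(m(s-1)),\qquad A_{n,m}(x):=\int_0^{\infty} A_n(sx)\,\eta_m(s)\,\d s,\quad x\in\Rd.
\end{equation*}
Then $\eta_m\ge 0$, $\int\eta_m=1$, $\supp\eta_m\subset[1-1/m,1+1/m]$, and $\int s\,\eta_m(s)\,\d s=1$ by symmetry of $\kappa$. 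Smoothness of $A_{n,m}$ follows from differentiation under the integral sign, and the uniform bound $|A_{n,m}|\le n+\beta_n$ from \eqref{B approx} combined with $\int\eta_m=1$. The pointwise convergence $A_{n,m}(x)\to A_n(x)$ is quantitative: $|A_{n,m}(x)-A_n(x)|\le n|x|/m$, using the $n$-Lipschitz continuity of $A_n$ and the support of $\eta_m$.

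For the Lipschitz bound, since $A_n$ is $n$-Lipschitz and $\int s\,\eta_m=1$,
\begin{equation*}
|A_{n,m}(x)-A_{n,m}(y)|\le n|x-y|\int s\,\eta_m(s)\,\d s=n|x-y|.
\end{equation*}
Monotonicity of $A_{n,m}$ is preserved for a structural reason: the identity
\begin{equation*}
\langle A_n(sx)-A_n(sy),\,x-y\rangle=s^{-1}\langle A_n(sx)-A_n(sy),\,sx-sy\rangle\ge 0\quad\forall\,s>0
\end{equation*}
integrates against the nonnegative weight $\eta_m(s)\,\d s$ to yield monotonicity of $A_{n,m}$. This is precisely where the multiplicative (radial) form of the averaging is essential: a spatial convolution or a truncation onto a ball would break this step.

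The crucial radial-derivative bound follows from differentiating under the integral and then integrating by parts in the scaling variable $s$, using that $\eta_m$ has compact support in $(0,\infty)$ so that the boundary terms vanish:
\begin{equation*}
\D A_{n,m}(x)\cdot x=\int_0^\infty \frac{\d}{\d s}[A_n(sx)]\,s\,\eta_m(s)\,\d s=-\int_0^\infty A_n(sx)\,\frac{\d}{\d s}[s\,\eta_m(s)]\,\d s.
\end{equation*}
The payoff is decisive: $A_n$ appears undifferentiated, contributing only its uniform bound $n+\beta_n$, while all the $s$-variation is transferred to the one-dimensional weight. Since $s\eta_m(s)\ge0$ vanishes outside $[1-1/m,1+1/m]$, its total variation equals $2\max_s(s\eta_m(s))\le 2(1+1/m)\cdot m\,\max\kappa$; this can be arranged to be $\le 2m$ by picking $\kappa$ close to a smoothed indicator of $[-1,1]$ (so $\max\kappa$ is close to $1/2$).

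The main obstacle — and what dictates the form of the construction — is the tension between monotonicity and radial-derivative control: natural smoothing schemes either destroy monotonicity (spatial convolution, the radial truncation $A_n(\min(|x|,m)\,x/|x|)$, convex combinations) or fail to yield any decay of $\D A_{n,m}\cdot x$ beyond the trivial $n|x|$. The positive rescaling $x\mapsto sx$ is essentially the only nonlinear transformation that intertwines trivially with monotonicity through the identity above, and integrating by parts in $s$ is what converts a radial-derivative estimate into a bound involving only $\|A_n\|_\infty$ and the total variation of a $1$D weight — exactly the combination needed to produce the required estimate.
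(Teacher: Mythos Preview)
Your approach is essentially the paper's: mollify $A_n$ in the scaling variable and integrate by parts in that variable so that the radial derivative is bounded by $\|A_n\|_\infty$ times the total variation of a one–dimensional weight; the paper works in the logarithmic scale $r=\rme^{-z}$ (its weight is $m\,\vartheta(r^m)/r$ with $\vartheta=\kappa(-\log\cdot)$), which delivers the constant $2m$ exactly because $\int_0^\infty|\vartheta'|=\int_\R|\kappa'|=2\max\kappa$ with $\max\kappa=1$. Your linear-in-$s$ variant produces the total variation of $s\,\eta_m(s)$, which is $O(m)$ but cannot literally be made $\le 2m$ at $m=1$ (a smooth $\kappa$ with $\supp\kappa\subset[-1,1]$ and $\int\kappa=1$ forces $\max\kappa>\tfrac12$, so $2(1+1/m)\,m\max\kappa>2$); since only a finite bound depending on $m,n$ is needed downstream (Lemma~\ref{le:bound}), this is a cosmetic discrepancy rather than a gap.
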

%
%
%
%
\begin{proof}
    We consider a family of mollifiers $\kappa_\eta=\eta^{-1}\kappa(\cdot/\eta)\in C^\infty_\rmc(\R)$,
    where $\kappa$ satisfies
    \begin{equation}
      \label{eq:71}
      \supp(\kappa)\subset [0,2],\quad 0\le \kappa\le \kappa(1)=1,\quad
      (1-x)\kappa'(x)\ge 0,\quad \int_\R\kappa(x)\,\d x=1,
    \end{equation}
  
  and the function $\vartheta\in C^\infty_{\rm
    c}(0,+\infty)$ defined by $\vartheta(r):=\kappa(-\log r)$, $r>0$.
 \Killed{where $\kappa$ is the mollifier of \eqref{eq:32} in dimension $d=1$.}
 We set
  \begin{equation}
    \label{eq:35}
    A_{n,m}(x):=m\int_0^{+\infty}
    A(rx)\vartheta(r^{m})\,\frac{\d r}r
  \end{equation}
  The change of variable $r=e^{-z}$ shows that
  \begin{displaymath}
    A_{n,m}(x)= 
    m\int_\R A_n(x\,\rme^{-z})\,\kappa(m\,z)\,\d z
    =A_n^x\ast \kappa_{1/m} (0),\quad\text{where }
    A_n^x(z):=A_n(x\,e^{z})\ \text{for } z\in \R.
  \end{displaymath}
  It is then easy to check that \Killed{ if $A_n$ has bounded derivatives then}
  {$|\D A_{n,m}|\le n$} \Killed{enjoys the same property}
    since
    \begin{displaymath}
      |D A_{n,m}(x)|\le
      m\int_\R \big|\D A_n(x\,\rme^{-z})\big|\rme^{-z}\,\kappa(m\,z)\,\d z
      \topref{B approx}\le
      n\int_\R \rme^{-y/m}\,\kappa(y)\,\d y
      \topref{eq:71}\le n,
    \end{displaymath}
  
  and {$A_{n,m}$} \Killed{it} converges pointwise to $A_n$ as
  $m\up+\infty$.

  Concerning the second bound of \eqref{eq:34} we easily have
  \begin{align*}
    \D\,A_{n,m}(x)\cdot x &=
    m\int_0^{+\infty}
    \D\,A_n(rx)\cdot x\vartheta(r^{m})\,{\d r}=
    m\int_0^{+\infty}
    \frac\d{\d r}\Big(A_n(rx)\Big)\vartheta(r^{m})\,{\d r}\\&=
    -m^2 \int_0^{+\infty}
    A_n(rx)\tilde \vartheta(r^{m})\,\frac{\d r}r\qquad \text{where }\tilde\vartheta(r):=r\vartheta'(r),
  \end{align*}
  so that the inequality follows choosing $\kappa$ even and
  nondecreasing in $[0,+\infty$), so that $\int_0^{+\infty}
  |\vartheta'(r)|\,\d r=2$.  
\end{proof}
\subsection{$\lambda$-monotonicity and rescaling}
\label{subsec:rescaling}
We show here a simple rescaling argument (inspired by
\cite{Carrillo-Toscani00}, where the rescaling technique has been
applied to a wide class of diffusion equations), which is useful to
deduce the estimates in the general $\lambda$-monotone case to
the simpler case of a monotone operator.

We therefore assume that $\lambda\neq 0$,
and we introduce the time rescaling functions
\begin{gather}
  \label{eq:36}
  \sfs(t):= \int_0^t \rme^{2\lambda r}\,\d r=\frac1{2\lambda}\big(\rme^{2\lambda t}-1\big),\quad
  \sft(s):= \frac1{2\lambda} \log(1+2\lambda s)\quad s\in [0,S_\infty)
  \intertext{where}
  \label{eq:61}
  S_\infty:=
  \begin{cases}
    +\infty&\text{ if $\lambda>0$},\\
    -1/(2\lambda)&\text{ if $\lambda<0$.}
  \end{cases}
\end{gather}

We associate to a family of probability measures $\rho_t$, $t\in [0,T]$,
their rescaled versions $\sigma_s$, $s\in [0,S_\infty)$, defined by
\begin{equation}
  \label{eq:40}
  \sigma_s(E):=\rho_{\sft(s)}(\rme^{-\lambda \sft(s)}\,E)\quad\text{for every Borel set }E\subset \Rd.
\end{equation}
If $B:\Rd\to\Rd$ is a $\lambda$-monotone Borel map we set $A:=B-\lambda
I$ and 
\begin{equation}
  \label{eq:41}
  \tilde B(y,s):=\rme^{-\lambda \sft(s)} B(\rme^{-\lambda \sft(s)}\,y),\quad
  \tilde A(y,s)=
  \rme^{-\lambda \sft(s)} A(\rme^{-\lambda \sft(s)}\,y)
  \quad\text{for }y\in \Rd,\ s\in \R.
\end{equation}
Notice that if $B$ is $\lambda$-monotone, then $A$ and $\tilde
A(\cdot,s)$, $s\in [0,S_\infty)$,  are monotone.
\begin{prop}
  \label{prop:rescaled}
  A continuous family $\rho_t\in \PP(\Rd)$ is a distributional
  solution of \eqref{eq:2} if and only if the rescaled measures
  $\sigma_s$ defined by \eqref{eq:40} and \eqref{eq:36} satisfy
  \begin{equation}
    \label{eq:39}
    \int_{0}^{S_\infty}\int_\Rd \Big(\partial_s\varphi+\Delta\varphi-\tilde A(\cdot,s)\cdot
    \nabla \varphi\Big)\,\d\sigma_s\,\d s=0\quad\forall\, \varphi\in
    C^\infty_{\mathrm c}(\Rd\times (0,S_\infty)).
  \end{equation}
  If $\rho$ satisfies \eqref{Cond} then
  \begin{equation}
    \label{eq:55}
    \int_{s_0}^{s_1}\int_\Rd |\tilde A(x,s)|\,\d\sigma_s\,\d s<+\infty\quad\text{for every }0<s_0<s_1<S_\infty,
  \end{equation}
  and in this case $\sigma$ satisfies
  \begin{equation} \label{eq:3}
    \int_\Rd \varphi(\cdot,s_1)\,\d\sigma_{s_1}-\int_\Rd \varphi(\cdot,s_0)\,\d\sigma_{s_0} =
    \int_{s_0}^{s_1} \int_\Rd \Big(\partial_s\varphi+ \Delta\varphi-
    \tilde A(y,s)\cdot \nabla \varphi\Big)\,\d\mu_s\,\d s.
\end{equation}
for every test function $  \varphi\in C^{2,1}_{\mathrm
  b}(\Rd\times[s_0,s_1])$ 
with bounded first and second derivatives.
\end{prop}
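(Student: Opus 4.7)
The plan is to derive all three claims from a single change of variables. Introduce the diffeomorphism $\Phi: \Rd \times (0,+\infty) \to \Rd \times (0, S_\infty)$ defined by $\Phi(x,t) := (\rme^{\lambda t} x, \sfs(t))$. By \eqref{eq:40}, the rescaled measure $\sigma_s$ is precisely the push-forward of $\rho_t$ (for $t = \sft(s)$) under the dilation $x \mapsto \rme^{\lambda t} x$. Test functions on the two sides correspond via $\zeta(x,t) := \varphi(\rme^{\lambda t} x, \sfs(t))$, and this correspondence sends $C^\infty_\rmc(\Rd \times (0,S_\infty))$ bijectively onto $C^\infty_\rmc(\Rd \times (0,+\infty))$.

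For the equivalence of \eqref{eq:2} and \eqref{eq:39} I would compute, at $(y,s) = (\rme^{\lambda t} x, \sfs(t))$,
\begin{equation*}
  \partial_t \zeta = \lambda y \cdot \nabla_y \varphi + \rme^{2\lambda t}\partial_s \varphi, \qquad \nabla_x \zeta = \rme^{\lambda t} \nabla_y \varphi, \qquad \Delta_x \zeta = \rme^{2\lambda t} \Delta_y \varphi.
\end{equation*}
Since $A = B - \lambda I$, \eqref{eq:41} gives $\tilde B(y,s) = \tilde A(y,s) + \lambda \rme^{-2\lambda t} y$, so the extra term $\lambda y \cdot \nabla_y \varphi$ produced by $\partial_t \zeta$ cancels exactly the linear contribution coming from $-B(x) \cdot \nabla_x \zeta = -\rme^{2\lambda t} \tilde B(y,s) \cdot \nabla_y \varphi$, leaving
\begin{equation*}
  \partial_t \zeta + \Delta_x \zeta - B(x) \cdot \nabla_x \zeta = \rme^{2\lambda t}\bigl(\partial_s \varphi + \Delta_y \varphi - \tilde A(y,s) \cdot \nabla_y \varphi\bigr)\circ \Phi(x,t).
\end{equation*}
Integrating against $\d\rho_t\,\d t$ and applying the change of variables $s = \sfs(t)$, whose Jacobian $\d s/\d t = \rme^{2\lambda t}$ absorbs the prefactor, turns \eqref{eq:2} into \eqref{eq:39}; the converse is the analogous pullback through $\Phi^{-1}$. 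For \eqref{eq:55}, the identical change of variables yields
\begin{equation*}
  \int_{s_0}^{s_1}\!\! \int_\Rd |\tilde A(y,s)|\,\d\sigma_s\,\d s = \int_{t_0}^{t_1}\!\! \rme^{\lambda t} \int_\Rd |A(x)|\,\d\rho_t\,\d t, \qquad t_i := \sft(s_i),
\end{equation*}
and the right-hand side is finite since $\rme^{\lambda t}$ is bounded on $[t_0,t_1]$ and $|A(x)| = |B(x) - \lambda x|$ is integrable by \eqref{Cond}.

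Finally, to pass from the distributional identity \eqref{eq:39} to the stronger integral identity \eqref{eq:3} on arbitrary $\varphi \in C^{2,1}_\rmb(\Rd \times [s_0,s_1])$, I would truncate: choose spatial cutoffs $\nchi_R$ as in \eqref{eq:32} and a family of smooth temporal cutoffs $\tau_\delta \in C^\infty_\rmc((s_0,s_1))$ equal to $1$ on $[s_0+\delta, s_1-\delta]$, and apply \eqref{eq:39} to $\varphi\, \nchi_R\, \tau_\delta$. The commutator terms produced by derivatives of $\nchi_R$ are supported on the annulus $\{R \le |y| \le 2R\}$ and carry prefactors $R^{-1}$ or $R^{-2}$; combined with $\sigma_s(\Rd) = 1$ and \eqref{eq:55} they vanish as $R \to +\infty$ by dominated convergence. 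Sending $\delta \downarrow 0$ and using the weak continuity $s \mapsto \sigma_s$ inherited from that of $\rho_t$, the contributions of $\tau'_\delta$ collapse to the boundary terms $\int \varphi(\cdot,s_1)\,\d\sigma_{s_1} - \int \varphi(\cdot,s_0)\,\d\sigma_{s_0}$. The main delicate point is controlling the drift term $\tilde A \cdot \nabla \varphi$ in this limit, which would fail without \eqref{eq:55} since $\tilde A$ is not assumed locally bounded; this is precisely why the summability \eqref{eq:55} is proved before \eqref{eq:3}.
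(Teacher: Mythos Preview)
Your proof is correct and follows essentially the same route as the paper: the same change-of-variables map $\Phi(x,t)=(\rme^{\lambda t}x,\sfs(t))$, the same chain-rule computation showing that the backward operator in the $(x,t)$ variables transforms into $\rme^{2\lambda t}$ times the rescaled operator in $(y,s)$, and the same use of dominated convergence (via \eqref{eq:55}) to upgrade the distributional identity to \eqref{eq:3}.

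Two small remarks. First, when you ``apply \eqref{eq:39} to $\varphi\,\nchi_R\,\tau_\delta$'' you are implicitly using test functions that are only $C^{2,1}_\rmc$, not $C^\infty_\rmc$; a mollification step (exactly the one the paper spells out with its sequence $\varphi_k$) is needed before \eqref{eq:39} can be invoked. This is routine once \eqref{eq:55} is available. Second, your formula for \eqref{eq:55} carries an extra factor $\rme^{\lambda t}$ compared to the paper's displayed computation; your version is in fact the correct one (the paper has $\sft'(s)=\rme^{-\lambda\sft(s)}$ where it should read $\rme^{-2\lambda\sft(s)}$), and of course this does not affect the finiteness conclusion.
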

  \begin{proof}
    We introduce the change of variable map 
    $\sfX(x,t):=(\rme^{\lambda t}x,\sfs(t))$ and for a given smooth function
    $\varphi\in C^\infty_\rmc(\Rd\times(0,s_\infty))$ we set 
    $\zeta(x,t):=\varphi(\rme^{\lambda \sfs(t)},\sfs(t))=\varphi\circ \sfX$.
    Denoting by $(y,s)\in \Rd\times[0,s_\infty )$ the new variables, easy calculations show that
    in $\Rd\times (0,+\infty)$ we have
    \begin{align*}
      \partial_t\zeta&=\sfs'\Big(\partial_s\varphi+\lambda \rme^{-2 \lambda t}\nabla_y\varphi\cdot y\Big)\circ \sfX,\quad
      &
      \nabla_x\zeta&=\rme^{\lambda t}\,\nabla_y\varphi\circ\sfX\\
      \Delta_x\zeta&=\rme^{2\lambda t}\,\Delta_y\varphi\circ\sfX&
      B\cdot \nabla_x\zeta&=\rme^{2\lambda t}\,\Big(\tilde B(y,s)\cdot \nabla_y\varphi\Big)\circ\sfX,
    \end{align*}
    where we used the fact that $B=\rme^{\lambda t}\tilde B\circ\sfX$.
    In particular we have
    \begin{displaymath}
      \label{eq:43}
      \partial_t\zeta- B\cdot \nabla_x\zeta=\sfs'\Big(\partial_s\varphi-
      \tilde A(y,s)\cdot \nabla_y\varphi\Big)\circ\sfX
    \end{displaymath}
    We thus have
    \begin{align*}
      \int_\Rd\Big(\partial_t\zeta+\Delta_x\zeta-B\cdot \nabla_x\zeta\Big)\,\d\rho_t&=
      \sfs'(t) \int_\Rd\Big(\partial_s\varphi+\Delta_y\varphi-\tilde A(y,s)\cdot \nabla_y\varphi\Big)\circ\sfX\,\d\rho_t\\&=
      \sfs'(t) \int_\Rd\Big(\partial_s\varphi+\Delta_y\varphi-\tilde A(y,s)\cdot \nabla_y\varphi\Big)\,\d\sigma_{\sfs(t)}
    \end{align*}
    since $\sigma_{\sfs(t)}(E)=\rho_t(\rme^{-\lambda t}\,E)$ for every Borel set $E\subset \Rd$.
    Eventually we obtain
    \begin{displaymath}
      \int_0^{+\infty}\int_\Rd\Big(\partial_t\zeta+\Delta_x\zeta-B\cdot \nabla_x\zeta\Big)\,\d\rho_t\,\d t=
      \int_0^{s_\infty} \int_\Rd \Big(\partial_s\varphi+\Delta_y\varphi-\tilde A(y,s)\cdot \nabla_y\varphi\Big)\,\d\sigma_s\,\d s
    \end{displaymath}
    \eqref{eq:55} follows by a simple application of the change of
    variable formula \eqref{eq:51}, since for every $t>0$
    \begin{align*}
      \int_\Rd \big|\tilde A(y,s)\big|\,\d\sigma_s(y)&\topref{eq:41}=
      \rme^{-\lambda\sft(s)}\int_\Rd \big|A(\rme^{-\lambda
        \sft(s)}y)\big|\,\d\sigma_{s}(y)
      \\&\topref{eq:40}=
      \rme^{-\lambda\sft(s)}\int_\Rd \big|A(x)\big|\,\d\rho_{\sft(s)}(x)=
      \rme^{-\lambda\sft(s)}\int_\Rd \big|B(x)-\lambda x\big|\,\d\rho_{\sft(s)}(x).
    \end{align*}
    Since $\sft'(s)=\rme^{-\lambda \sft(s)}$ we eventually get for
    $t_i=\sft(s_i)$ 
    \begin{align*}
      \int_{s_0}^{s_1}\int_\Rd |\tilde A(x,s)|\,\d\sigma_s\,\d s&=
      \int_{s_0}^{s_1}\Big(\int_\Rd \big|B(x)-\lambda
      x\big|\,\d\rho_{\sft(s)}(x)\Big)\,\sft'(s)\,\d s\\&=
      \int_{t_0}^{t_1} \int_\Rd \big|B(x)-\lambda
      x\big|\,\d\rho_t(x)\,\d t\topref{Cond}<+\infty.
    \end{align*}
    \eqref{eq:3} follows from \eqref{eq:39} when $\varphi$ belongs to
    $C^\infty_{\rmc}(\Rd\times [s_0,s_1])$. 
    If $\varphi\in C^{2,1}_{\rmb}(\Rd\times[s_0,s_1])$
    via a standard convolution and truncation argument we find an
    approximation sequence $\varphi_k \in C^\infty_{\mathrm
      c}(\Rd\times [s_0,s_1])$
    such that $\varphi_k,\partial_t\varphi_k,\nabla\varphi_k,\Delta\varphi_k$
    remains
    uniformly bounded and converge pointwise to
    $\varphi,\partial_t\varphi,\nabla\varphi,\Delta\varphi$
    respectively.
    By \eqref{eq:55} we can apply the Lebesgue Dominated
    Convergence theorem to pass to the limit in \eqref{eq:3} written
    for $\varphi_k$, thus obtaining the same identity for $\varphi$.
  \end{proof}

We conclude this section by a simple remark combining the
regularization technique of Section~\ref{subsec:Bregular} and the time
rescaling \eqref{eq:41}.
\begin{lemma}
  \label{le:bound}
  Let $A:=B-\lambda I$ be a monotone operator, 
  let us consider a sequence $A_{n,m}$,
  $n,m\in \N$, of smooth monotone operators given by Theorem
  \ref{approxB} and Proposition \ref{approxBn},  and let us set
  \begin{equation}
    \label{eq:63}
    \tilde A_{n,m}(y,s):=\rme^{-\lambda\sft(s)}
    A_{n,m}(\rme^{-\lambda\sft(s)}y)\quad y\in \Rd,\ s\in [0,S_\infty)
  \end{equation}
  defined as in \eqref{eq:41}, \eqref{eq:36}.
  Then $\tilde A_{n,m}$ are Lipschitz in $\Rd\times[0,S]$ for every $S\in
  [0,S_\infty)$.
\end{lemma}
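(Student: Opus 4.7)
The plan is to verify that $\tilde A_{n,m}$ has bounded partial derivatives on $\Rd\times[0,S]$, from which the Lipschitz property follows by the mean value theorem. Since the $y$ variable ranges over all of $\Rd$, the key point is that the "radial" growth of $DA_{n,m}$ must be controlled; this is exactly what the bound $\sup_{x\in\Rd}|\D A_{n,m}(x)\cdot x|\le 2m(n+\beta_n)$ in Proposition~\ref{approxBn} provides.

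First, I abbreviate $r:=\rme^{-\lambda\sft(s)}$, so that $\tilde A_{n,m}(y,s)=r\,A_{n,m}(ry)$. Since $\sft\in C^\infty([0,S_\infty))$, the function $s\mapsto r$ is smooth on $[0,S_\infty)$, and on the compact interval $[0,S]$ both $r$ and $\sft'(s)=r^{2}$ (hence also $\lambda\sft'(s)$) are uniformly bounded by some constant $C_{S}$ depending only on $\lambda$ and $S$. In particular $\tilde A_{n,m}\in C^\infty(\Rd\times[0,S_\infty);\Rd)$.

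Next, I compute the two partial derivatives. Directly,
\begin{equation*}
  \nabla_{y}\tilde A_{n,m}(y,s)=r^{2}\,\D A_{n,m}(ry),
\end{equation*}
so by the first inequality in \eqref{eq:34}, $|\nabla_{y}\tilde A_{n,m}(y,s)|\le n\,r^{2}\le n\,C_{S}^{2}$. For the $s$-derivative, using $\partial_{s}r=-\lambda\sft'(s)\,r$, the chain rule gives
\begin{equation*}
  \partial_{s}\tilde A_{n,m}(y,s)
  =-\lambda\sft'(s)\Big[r\,A_{n,m}(ry)+\D A_{n,m}(ry)\cdot(ry)\Big]
  =-\lambda\sft'(s)\Big[\tilde A_{n,m}(y,s)+\D A_{n,m}(ry)\cdot(ry)\Big].
\end{equation*}
Applying the last two inequalities in \eqref{eq:34} with $x=ry$, we obtain
\begin{equation*}
  |\partial_{s}\tilde A_{n,m}(y,s)|
  \le |\lambda|\sft'(s)\big[r(n+\beta_{n})+2m(n+\beta_{n})\big]
  \le |\lambda|\,C_{S}\,(C_{S}+2m)(n+\beta_{n}),
\end{equation*}
which is uniform in $(y,s)\in\Rd\times[0,S]$. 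This uniform bound on the radial term is where the pointwise estimate on $\D A_{n,m}(x)\cdot x$ is essential: without it, the factor $y$ appearing after differentiation would blow up as $|y|\to\infty$.

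Having bounded both partial derivatives uniformly on $\Rd\times[0,S]$, a standard application of the mean value theorem along the piecewise-linear path from $(y,s)$ to $(y',s')$ yields
\begin{equation*}
  |\tilde A_{n,m}(y,s)-\tilde A_{n,m}(y',s')|\le L_{S}\big(|y-y'|+|s-s'|\big)
\end{equation*}
for some constant $L_{S}$ depending on $n,m,\lambda,S$ (and $\beta_n$), proving the Lipschitz property. The only subtlety, already addressed, is the unboundedness of $y$; all other steps are routine calculus.
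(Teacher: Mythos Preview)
Your proof is correct and follows the same approach as the paper's: bound $\nabla_y\tilde A_{n,m}$ via the Lipschitz constant of $A_{n,m}$, and bound $\partial_s\tilde A_{n,m}$ by combining the boundedness of $A_{n,m}$ with the crucial estimate $\sup_x|\D A_{n,m}(x)\cdot x|\le 2m(n+\beta_n)$ from Proposition~\ref{approxBn}. There is a harmless arithmetic slip in your chain-rule computation (the term $\D A_{n,m}(ry)\cdot(ry)$ in the bracket should also carry a factor of $r$, since $\partial_s r=-\lambda\sft'(s)r$ multiplies both terms), but since $r$ is bounded on $[0,S]$ this does not affect the conclusion.
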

\begin{proof}
  We just have to check that $|\partial_s \tilde A_{n,m}(\cdot,s)|$ is
  uniformly bounded in $\Rd\times[0,S]$:
  sine $\sft'(s)=\rme^{-\lambda \sft(s)}$ a simple calculation yields
  \begin{displaymath}
    \partial_s \tilde A_{n,m}(y,s)=-\lambda
    \rme^{-\lambda\sft(s)}\, \tilde
    A_{n,m}(y,s)-\lambda\rme^{-2\lambda \sft(s)}
    \D A_{n,m}(\rme^{-\lambda\sft(s)}y)\cdot y =
    -\lambda\rme^{-\lambda\sft(s)}\tilde Q_{n,m}(y,s)      
  \end{displaymath}
  where
  \begin{displaymath}
    Q_{n,m}(x)=A_{n,m}(x)+\D A_{n,m}(x)\cdot x,\quad x\in \Rd.
  \end{displaymath}
  Since $\rme^{-\lambda\sft(s)}$ is uniformly bounded with all its
  derivative in each compact interval $[0,S]$, $S<\infty$,
  \eqref{eq:34} show that $Q_{n,m}$ is bounded and therefore $\tilde
  A_{n,m}$ is Lipschitz with respect to $s$.
\end{proof}

\section{A comparison result for the backward equation}
In this section we give the proof of Theorem
\ref{thm:crucialintro} in a slightly more general form, in order to be
applied to (a suitably regularized version of) the rescaled
formulation considered in Proposition~\ref{prop:rescaled}.

Let us suppose that $\tilde A:(y,s)\in \Rd\times[0,S_\infty)\to \tilde
A(y,s)\in\Rd$ is a smooth
vector field 
satisfying 
 \begin{gather}
  \label{eq:38}
  \sup_{\Rd\times [0,S]}|\tilde A_s|+|\partial_s \tilde A|+|\D\tilde
  B|<+\infty\quad\text{for every $S\in [0,S_\infty)$,}\\
  \label{eq:45}
  \tilde A(\cdot,s)\text{ is monotone for every }s\in
  [0,S_\infty).
\end{gather}
%
We denote by $\Oper\cdot$ the differential operator defined by
\begin{equation}
  \label{eq:46}
  \Oper\varphi(y,s):=\Delta_y \varphi(y,s)-\tilde A(y,s)\cdot
  \nabla_y\varphi(y,s)\quad
  \varphi(\cdot,s)\in
  C^2(\Rd),\quad
  (y,s)\in \Rd\times [0,S_\infty).
\end{equation}
Thanks to \eqref{eq:38} and \eqref{eq:45}, we can apply the existence result
\cite[Theorem~3.2.1]{Stroock-Varadhan06} and for every $S\in [0,S_\infty)$
and $\phi\in C^\infty_\rmc(\Rd)$ we can find a solution
$\varphi\in C^{2,1}_\rmb(\Rd\times [0,S))$ of the backward evolution equation
\begin{equation} \label{PB:back}
  \partial_s\varphi + \Oper\varphi = 0\quad\text{in } \Rd\times[0,S],
  \quad
  \varphi(\cdot,S)=\phi(\cdot).
\end{equation} 
We have
\begin{teor}
  \label{thm:crucial}
  Let $h:[0,+\infty)\to \R$ be a continuous and non-decreasing function.
  Let $\varphiu,\varphid\in C^{2,1}_{\mathrm b}(\Rd\times [0,S])$
  be solutions of the ``backward'' inequality
  \begin{equation} \label{eq:23}
    \partial_s \varphi+\Oper{\varphi}\ge0\quad
    \text{in }\Rd\times [0,S]
  \end{equation}
such that
\begin{equation*}
  \varphiu(\yu,S)+\varphid(\yd,S)\le h(|y_1-y_2|)\quad\text{for every }\yu,\yd\in \Rd.
\end{equation*}
Then
\begin{equation*}
  \varphiu(\yu,0)+\varphid(\yd,0)\le h(|\yu-\yd|)\quad\text{for every
  }\yu,\yd\in \Rd.
\end{equation*}
\end{teor}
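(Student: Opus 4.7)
The strategy is to combine the two separate backward inequalities \eqref{eq:23} into a single parabolic problem on the doubled variable $(y_1,y_2)\in\Rd\times\Rd$, and to apply a perturbed maximum principle. Define
\[
W(y_1,y_2,s):=\varphiu(y_1,s)+\varphid(y_2,s)-h(|y_1-y_2|)
\]
on $\Rd\times\Rd\times[0,S]$; the hypothesis is $W(\cdot,\cdot,S)\le 0$, and the goal $W(\cdot,\cdot,0)\le 0$ will follow from the stronger statement $W\le 0$ everywhere. As a preliminary step, reduce to the case where $h$ is $C^2$ and $z\mapsto h(|z|)$ is $C^2$ on $\Rd$ (in particular $h'(0)=0$): approximate $h$ from above by non-decreasing smooth $h_n\downarrow h$ of this form; the terminal inequality is preserved with $h_n$, and the conclusion for each $h_n$ passes to $h$ in the limit.

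With $h$ smooth, argue by contradiction. If $\sup W>0$, then for small $\alpha,\eps>0$ set
\[
W_{\alpha,\eps}(y_1,y_2,s):=W(y_1,y_2,s)-\alpha\bigl(|y_1|^2+|y_2|^2\bigr)-\eps(S-s).
\]
Since $\varphiu,\varphid$ are bounded, the quadratic penalty forces $W_{\alpha,\eps}\to-\infty$ at infinity, so the supremum is attained at some $(\bar y_1,\bar y_2,\bar s)$ and is still strictly positive for $\alpha,\eps$ small. As $W_{\alpha,\eps}(\cdot,\cdot,S)\le 0$, necessarily $\bar s<S$, and the max conditions give $\partial_s\varphiu+\partial_s\varphid\le -\eps$ together with
\[
\nabla\varphiu(\bar y_1,\bar s)=h'(r)\,e+2\alpha\bar y_1,\qquad \nabla\varphid(\bar y_2,\bar s)=-h'(r)\,e+2\alpha\bar y_2,
\]
where $r:=|\bar y_1-\bar y_2|$, $e:=(\bar y_1-\bar y_2)/r$ (when $r=0$, $h'(0)=0$ annihilates the $h$-contribution).

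The heart of the argument is the second-order condition, where the doubling structure pays off. The Hessian of $(y_1,y_2)\mapsto h(|y_1-y_2|)$ has diagonal blocks $D^2 f$ and off-diagonal blocks $-D^2 f$, with $f(z):=h(|z|)$: this is constant along the diagonal $\{y_1=y_2\}$, so testing the negative semidefiniteness of $D^2_{(y_1,y_2)}W_{\alpha,\eps}$ against a diagonal vector $(v,v)$ annihilates the $h$-block and yields $v^\top\bigl(D^2\varphiu(\bar y_1)+D^2\varphid(\bar y_2)\bigr)v\le 4\alpha|v|^2$; summing over an orthonormal basis gives $\Delta\varphiu+\Delta\varphid\le 4\alpha d$. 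Summing the two inequalities \eqref{eq:23} at $(\bar y_1,\bar y_2,\bar s)$ and inserting the first-order identities,
\[
4\alpha d\ge \eps+h'(r)\bigl[\tilde A(\bar y_1,\bar s)-\tilde A(\bar y_2,\bar s)\bigr]\cdot e+2\alpha\bigl[\tilde A(\bar y_1,\bar s)\cdot\bar y_1+\tilde A(\bar y_2,\bar s)\cdot\bar y_2\bigr].
\]
Monotonicity of $\tilde A(\cdot,\bar s)$ together with $h'(r)\ge 0$ make the middle term non-negative, while the last term is bounded below by $-2\alpha\|\tilde A\|_\infty(|\bar y_1|+|\bar y_2|)$, using the uniform bound on $\tilde A$ from \eqref{eq:38}. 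Since $W_{\alpha,\eps}(\bar y_1,\bar y_2,\bar s)>0$ forces $\alpha(|\bar y_1|^2+|\bar y_2|^2)\le \|\varphiu\|_\infty+\|\varphid\|_\infty$, one has $\alpha|\bar y_i|=O(\sqrt\alpha)$; letting $\alpha\downarrow 0$ at fixed $\eps>0$ yields $\eps\le 0$, the desired contradiction.

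The main obstacle is the regularity of $h$: the trace-and-cancel step at second order is the only place the doubled structure is exploited, and it genuinely requires $h\circ|\cdot|\in C^2(\Rd)$, which fails at the origin unless $h$ has an even smooth radial extension. Constructing a non-decreasing smooth approximation $h_n\downarrow h$ with $h_n'(0)=0$ and justifying the limit passage is therefore the only serious technical point; everything else follows from the standard parabolic maximum principle and the monotonicity of $\tilde A$.
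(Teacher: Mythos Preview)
Your proof is correct and is essentially the paper's argument: variable doubling plus a perturbed maximum principle, with the diagonal trick $H(\bar y_1+z,\bar y_2+z)=H(\bar y_1,\bar y_2)$ to extract $\Delta\varphi^1+\Delta\varphi^2\le 0$ and the monotonicity of $\tilde A$ together with $h'\ge 0$ to handle the drift. The only cosmetic differences are that the paper weights the quadratic penalty by $e^{-s}$ (so the perturbed $\varphi^j_{\eps,\delta}=\varphi^j-\delta(S-s)-\eps e^{-s}|y_j|^2$ themselves satisfy the \emph{strict} backward inequality for $\eps$ small given $\delta$, the term $\eps e^{-s}|y_j|^2$ from $\partial_s$ absorbing the drift contribution and thereby avoiding your $O(\sqrt\alpha)$ limit), and phrases the second-order step as ``the restriction to the diagonal has a maximum at $z=0$'', which needs only $h\in C^1$ with $h'(0)=0$ rather than $h\circ|\cdot|\in C^2$.
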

\begin{proof} 
  By approximating $h$ from above, it is not restrictive to assume
  that $h \in C^1[0,+\infty)$ with $h'(0)=0$; in particular the map
  $H(\yu,\yd):= h(|\yu-\yd|)$ is of class $C^1$ in $\Rd\times\Rd$ and
  satisfies
  \begin{gather}
    \label{eq:47}
    \nabla_{\yu}H(\yu,\yd)=-\nabla_{\yd}H(\yu,\yd)=g(\yu,\yd)(\yu-\yd),
    \intertext{where }
    \label{eq:48}
    0\le g(\yu,\yd)=g(\yd,\yu):=
    \begin{cases}
      \frac{h'(|\yu-\yd|)}{|\yu-\yd|}&\text{if }\yu\neq\yd,\\
      0&\text{if }\yu=\yd.
          \end{cases}
  \end{gather}
  
  The argument combines a \variabledoubling technique and a classical variant of the maximum principle.
  Let us first show that if $\varphiu,\varphid$ satisfy the \emph{strict} inequality
\begin{equation} \label{clash<0} 
  \partial_s \varphij+\Oper{\varphij}>0\quad
  \text{in }\Rd\times[0,S),\quad j=1,2.
\end{equation}
then the function
\begin{equation*}
  f(\yu,\yd,s) := \varphiu(\yu,s)+\varphid(\yd,s)-H(\yu,\yd)
\end{equation*}
cannot attains a (local) maximum in a point $(\byu,\byd,\bar s)$ with
$\bar s<S$.
We argue by contradiction and we suppose that $(\byu,\byd,\bar s)$ is
a local maximizer of $f$ with $\bar s<S$; we thus have
\begin{equation*}
  \partial_s f(\byu,\byd,\bar s) \le 0, 
 \quad\nabla_{\yu} f(\byu,\byd,\bar s) = 0,
 \quad\nabla_{\yd} f(\byu,\byd,\bar s) = 0;
\end{equation*}
so that 
\begin{equation} \label{dt<0}
\begin{aligned}
\partial_t \varphiu(\byu,\bar s) + \partial_t \varphid(\byd,\bar s)\le 0
\end{aligned}
\end{equation}
\begin{equation*} 
\begin{aligned}
\nabla_{\yu} \varphiu(\byu,\bar s)
 &= \nabla_{\yu} H(\byu,\byd) \topref{eq:47}= 
 g(\byu,\byd)(\yu-\yd)\\
\nabla_{\yd} \varphid(\byd,\bar s)
&= \nabla_{\yd} H(\byu,\byd) \topref{eq:47}= 
g(\byu,\byd)(\yd-\yu).
\end{aligned} 
\end{equation*}
It follows that 
\begin{equation} \label{Bn<0}
\begin{aligned}
  \tilde A(\byu,\bar s)\cdot\nabla_{\yu}\varphiu(\byu,\bar s)+
  &\tilde A(\byd,\bar s)\cdot \nabla_{\yd}\varphid(\byd,\bar s) \\&=
  g(\byu,\byd)\big(\tilde A(\byu,\bar s)-\tilde A(\byd,\bar
  s)\big)\cdot (\byu-\byd)
  \topref{eq:48}\ge0
\end{aligned}
\end{equation}
On the other hand, since $H(\byu+z,\byd+z) = H(\byu,\byd)$, the function
\begin{displaymath}
  \Rd\ni z\mapsto \varphiu(\byu+z,\bar s)+\varphid(\byd+z,\bar s)-H(\byu,\byd)=
  f(\byu+z,\byd+z,\bar s)
\end{displaymath}
has a local maximum at $z=0$ so that 
\begin{equation} \label{Delta<0}
\begin{aligned}
  \Delta_{\yu}\varphiu(\byu,\bar s)+\Delta_{\yd}\varphid(\byd,\bar s)\le 0.
\end{aligned}
\end{equation}
Combining \eqref{dt<0},\eqref{Bn<0}, and \eqref{Delta<0} we obtain 
\begin{equation*}
  (\partial_s \varphiu+\Oper{\varphiu})(\byu,\bar s)+(\partial_s\varphid+\Oper\varphid)(\byd,\bar s)\le 0,
\end{equation*}
  which contradicts \eqref{clash<0}.
  
Suppose now that $\varphiu,\varphid$ satisfy the inequality \eqref{eq:23} and let us set for $\eps,\delta>0$
\begin{equation*}
  \varphij_{\eps,\delta}(y_j,s):= \varphij(y_j,s)-\delta(S-s)-\eps
  \rme^{-s}|y_j|^2
  \quad j=1,2.
\end{equation*}
We easily get
\begin{align*}
 \partial_s\varphij_{\eps,\delta}&=\partial_s \varphij+\delta +\eps \rme^{-s}|y_j|^2\\
 \Oper{\varphij_{\eps,\delta}}&=\Oper{\varphij}-\rme^{-s}\big(d\eps+2\eps
 \tilde A(y_j,s)\cdot y_j\big)\\
 \partial_s \varphij_{\eps,\delta}+\Oper{\varphij_{\eps,\delta} }
 &\ge\delta +\eps \rme^{-s}\big(|y_j|^2-d-C_n|y_j|\big),
\end{align*}
where $C_n=\sup_{y,s} |\tilde A_n(y,s)|<+\infty$. 
  
It follows that for every $\delta>0$ there exists a coefficient
$\eps>0$ sufficiently small such that
$\varphiu_{\eps,\delta},\varphid_{\eps,\delta}$ satisfy
\eqref{clash<0}.
On the other hand, the continuous function
\begin{equation*}
  (\yu,\yd,s)\mapsto f_{\eps,\delta}(\yu,\yd,s):=\varphiu_{\eps,\delta}(\yu,s)+\varphid_{\eps,\delta}(\yd,s)-
  h(|\yu-\yd|)\quad \yu,\yd\in  \Rd,\ s\in [0,S],
\end{equation*}
attains its maximum at some point $(\byu,\byd,\bar s)\in
\Rd\times\Rd\times [0,S]$;
by the previous argument, we conclude that $\bar s=S$ and therefore
for every $\yu,\yd\in \Rd$
\begin{align*}
  \varphiu_{\eps,\delta}(\yu,0)&+\varphid_{\eps,\delta}(\yd,0)-h(|\yu-\yd|)\le
  f_{\eps,\delta}(\byu,\byd,S)\le \varphiu(\byu,S)+\varphid(\byd,S) - h(|\byu-\byd|)\le 0.      
\end{align*}
  Passing to the limit as $\eps,\delta\downarrow 0$ we conclude.
\end{proof}
We conclude this section by recalling two well known estimates:
\begin{lemma}[Uniform estimates] \label{Grad}
  Let $\varphi\in C^{2,1}_{\mathrm b}(\Rd\times[0,S])\cap
  C^\infty(\Rd\times (0,S))$ be the solution of
  \eqref{PB:back}.
Then
\begin{equation}
\label{eq:49}
\sup_{\Rd\times[0,S]} |\varphi|\le \sup_{ \Rd} |\phi|
,\qquad
\sup_{\Rd\times[0,S]} |\nabla\varphi|\le \sup_{\Rd} |\nabla\phi|.
\end{equation}
\end{lemma}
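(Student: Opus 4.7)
The plan is to derive both inequalities by a maximum principle, after reversing time to recast \eqref{PB:back} as a forward parabolic problem. Setting $\psi(y,t):=\varphi(y,S-t)$ and $\tilde A^*(y,t):=\tilde A(y,S-t)$, the function $\psi$ solves
\begin{equation*}
  \partial_t \psi=\Delta_y\psi-\tilde A^*(y,t)\cdot\nabla_y\psi\quad\text{in }\Rd\times[0,S],\qquad \psi(\cdot,0)=\phi,
\end{equation*}
with $\psi\in C^{2,1}_{\mathrm b}(\Rd\times[0,S])$ and $|\tilde A^*|,|\nabla\tilde A^*|$ bounded by \eqref{eq:38}.

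For the first bound I would apply a standard maximum principle on the unbounded domain via the auxiliary perturbation $\psi_\eps(y,t):=\psi(y,t)-\eps\bigl(|y|^2+K t\bigr)$. A direct computation gives $\partial_t\psi_\eps-\Delta\psi_\eps+\tilde A^*\cdot\nabla\psi_\eps=-\eps K+2d\eps+2\eps\,\tilde A^*(y,t)\cdot y$; using $|\tilde A^*(y,t)|\le C(1+|y|)$ (which follows from \eqref{eq:38}) one checks that, for $K$ large enough, this quantity is $\le 0$ on the set where $\psi_\eps$ can attain a maximum, and since $\psi$ is bounded while $|y|^2\to+\infty$, the supremum of $\psi_\eps$ is actually attained at some finite point. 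The inequality above then forces this maximum to be on $\{t=0\}$, whence $\sup\psi_\eps\le\sup_y(\phi(y)-\eps|y|^2)\le\sup\phi$. Passing to the limit $\eps\down0$ and repeating with $-\psi$ gives $\sup_{\Rd\times[0,S]}|\varphi|=\sup_{\Rd\times[0,S]}|\psi|\le\sup_{\Rd}|\phi|$.

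For the gradient estimate the key idea is to show that $u:=|\nabla_y\varphi|^2$ solves a differential inequality of the same sign, thanks to the monotonicity of $\tilde A$. Differentiating \eqref{PB:back} with respect to $y_k$ yields $\partial_s(\partial_k\varphi)+\Oper{\partial_k\varphi}=(\partial_k\tilde A)\cdot\nabla\varphi$; multiplying by $2\,\partial_k\varphi$, summing over $k$, and using $\Delta u=2\sum_k\partial_k\varphi\,\Delta\partial_k\varphi+2|\D^2\varphi|^2$ we obtain
\begin{equation*}
  \partial_s u+\Oper u=2\,(\nabla\varphi)^{\!\top}\,\D\tilde A\,\nabla\varphi+2\,|\D^2\varphi|^2.
\end{equation*}
The crucial point is that the smooth $\lambda$-monotonicity assumption \eqref{eq:45} is equivalent to $w^{\!\top}\D\tilde A(y,s)\,w\ge0$ for every $w\in\Rd$, so the right-hand side is nonnegative. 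Hence $\partial_s u+\Oper u\ge0$, and after reversing time $w(y,t):=u(y,S-t)$ is a bounded classical subsolution of the same forward equation satisfied by $\psi$. Applying exactly the same maximum-principle argument as above (which works verbatim for subsolutions) gives $\sup_{\Rd\times[0,S]}u\le\sup_{\Rd}u(\cdot,S)=\sup_{\Rd}|\nabla\phi|^2$.

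The main technical obstacle is the first step: running the maximum principle on the unbounded domain $\Rd\times[0,S]$ for a drift $\tilde A^*$ that is not assumed to be bounded but only has at most linear growth (coming from Proposition \ref{approxBn} and the boundedness of $\D\tilde A$). The quadratic barrier $\eps|y|^2$ dominates the linear growth of the drift exactly because the zero-order contribution $2\eps\tilde A^*\cdot y$ is absorbed by the large negative term $-\eps K$ once one restricts to a level set where the maximum can occur; this is a classical trick, and once it is carried out the two estimates follow in parallel.
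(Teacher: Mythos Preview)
Your approach is essentially the paper's: the maximum principle for the first estimate, and for the second the Bochner-type identity
\[
\partial_s|\nabla\varphi|^2+\Oper{|\nabla\varphi|^2}=2(\nabla\varphi)^{\!\top}\D\tilde A\,\nabla\varphi+2|\D^2\varphi|^2\ge 0
\]
(using monotonicity of $\tilde A$) followed by another application of the maximum principle. The paper simply invokes the maximum principle from \cite{Stroock-Varadhan06} while you spell out a barrier argument; the time reversal is cosmetic.

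There is one slip in your last paragraph. You say $\tilde A$ is only known to have linear growth and that the quadratic barrier still handles this. In fact \eqref{eq:38} gives $\tilde A$ \emph{bounded} on $\Rd\times[0,S]$ (and the $\tilde A_{n,m}$ to which the lemma is actually applied are bounded by construction, cf.\ \eqref{eq:34}). This matters: with genuinely linear growth the cross term $2\eps\,\tilde A^*\!\cdot y$ is of order $\eps|y|^2$, and at the maximiser $|\bar y|^2\lesssim M/\eps$ this forces $\eps K$ to stay bounded below by a positive constant, so the limit $\eps\downarrow0$ does \emph{not} give $\psi\le\sup\phi$. With $|\tilde A^*|\le C$ the same term is $O(\eps|\bar y|)=O(\sqrt\eps)$ and your argument goes through cleanly. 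So the fix is just to use the boundedness already available. (Incidentally, the drift contribution in your displayed computation should be $-2\eps\,\tilde A^*\!\cdot y$, not $+2\eps\,\tilde A^*\!\cdot y$; this sign is irrelevant once you bound by absolute value.)
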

\begin{proof}
The first inequality is direct application of
the maximum principle (see
e.g.~\cite[Theorem~3.1.1]{Stroock-Varadhan06}.
By differentiating the equation with respect to $y$ we obtain
\begin{equation*}
\partial_s \,\D\varphi+\Oper{\D\varphi}-\D \tilde A\,\D\varphi=0
\end{equation*}
and then
\begin{equation*}
\frac 12\partial_t |\D \varphi|^2+
\frac 12 \Oper{|\D\varphi|^2} - \D \tilde A\D\varphi\cdot\D\varphi - |\D^2 \varphi|^2=0.
\end{equation*}
Since $\tilde A$ is monotone the quadratic form associated to
$\D\tilde A$ is nonnegative and therefore
\begin{equation*}
\partial_t |\D \varphi|^2+
\Oper{|\D\varphi|^2} \ge0.
\end{equation*}
A further application of the maximum principle yields \eqref{eq:49}.
\end{proof}

\section{Proof of Theorem \ref{teor:main}}

We split the proof in various steps.
Just to fix some notation, we consider a family $A_{n,m}$ of smooth, bounded,
Lipschitz, and monotone operators approximating $A:=B-\lambda I$ as
in Proposition
\ref{approxBn} and their rescaled version $\tilde A_{n,m}$ defined by
\eqref{eq:63}.
$\Opernm{n,m}\cdot$ are the associated differential operators
\begin{equation}
  \label{eq:46bis}
  \Opernm{n,m}\varphi(y,s):=\Delta_y \varphi(y,s)-\tilde A_{n,m}(y,s)\cdot
  \nabla_y\varphi(y,s)\quad
  \varphi(\cdot,s)\in
  C^2(\Rd),\quad
  (y,s)\in \Rd\times [0,S_\infty),
\end{equation}
as in \eqref{eq:64}. Lemma \ref{le:bound} show that $\tilde A$
satisfy \eqref{eq:38}.

\noindent\underline{Step 1:} \emph{reduction to the monotone case
}$\lambda=0$.
When $\lambda\neq 0$ we apply the rescaling argument of section \ref{subsec:rescaling}:
we thus introduce the time rescaling $\sft(s)$ defined by
\eqref{eq:36}
and the corresponding 
measures $\sigma^i_s=\tilde\rho^i_{\sft(s)}$ as in \eqref{eq:40},
which satisfy \eqref{eq:55} and \eqref{eq:3} for the rescaled
operators $\tilde A$ of \eqref{eq:41}.
Taking into account Remark \ref{rem:equivalent} and the fact that
$\sigma^i_s=\tilde\rho^i_{\sft(s)}$, the thesis follows if we show
that
\begin{equation}
  \label{eq:59}
  \Cost h{\sigma^1_s}{\sigma^2_s}\le \Cost
  h{\sigma^1_0}{\sigma^2_0}\quad\text{for every }s\in [0,S_\infty),
\end{equation}
(see \eqref{eq:61} for the definition of $S_\infty$).

\noindent\underline{Step 2:} {\em If
  \begin{equation}
    \label{eq:60}
    \Cost h{\sigma^1_{s_1}}{\sigma^2_{s_1}}\le \Cost
    h{\sigma^1_{s_0}}{\sigma^2_{s_0}}\quad\text{for every }0<s_0<s_1<S_\infty,
  \end{equation}
  then \eqref{eq:59} holds.}
When $h$ is bounded, \eqref{eq:60} implies \eqref{eq:59} by taking a
simple limit as $s_0\downarrow0$ and using the fact that the map
$(\sigma^1,\sigma^2)\mapsto \Cost h{\sigma^1}{\sigma^2}$ is continuous
with respect to weak convergence in $\PP(\Rd)\times\PP(\Rd)$.
If \eqref{eq:59} holds for every bounded Lipschitz cost, then it holds
for every {continuous and nondecreasing} \Killed{admissibl}e cost by Lemma \ref{le:cost_approximation}.

\noindent\underline{Step 3:} We claim the following: 

{\em Let $\phiu,\phid\in
  C^\infty_{\mathrm c}(\Rd)$
  be satisfying the constraint $\phiu(\yu)+\phid(\yd)\le h(|\yu-\yd|)$
  Then
\begin{equation}
  \label{eq:64}
  \int_\Rd \phi^1\,\d\sigma^1_{s_1}+\int_\Rd \phi^2\sigma^2_{s_1}\le
  \Cost h{\sigma^1_{s_0}}{\sigma^2_{s_0}}+
  \ell\, K_{n,m}
\end{equation}
where  $\ell:=\sup_\Rd |\nabla\phiu|+\sup_\Rd |\nabla\phid|$ and
\begin{equation*}
  K_{n,m} := \int_{s_0}^{s_1}\int_\Rd  |\tilde A_{n,m}-\tilde A|
  \,\d \sigma^1_s\,\d s+
  \int_{s_0}^{s_1}\int_\Rd  |\tilde A_{n,m}-\tilde A|
  \,\d \sigma^2_s\,\d s.
\end{equation*}}

Indeed, applying 
\cite[Theorem~3.2.1]{Stroock-Varadhan06} 
we can introduce the solutions $\varphiu_{n,m},\varphid_{n,m}\in C^{2,1}_\rmb(\Rd\times[s_0,s_1])$ of the backward equations
\begin{equation*}
  \partial_s \varphij_{n,m}+\Opernm{n,m}\varphij=0\quad\text{in }
  \Rd\times [s_0,s_1],\quad \varphij_{n,m}(\cdot,s_1)=
  \phi^j(\cdot)\quad\text{in }\Rd.
\end{equation*}
Identity~\eqref{eq:3} shows that, for $j=1,2$,
\begin{align*}
  \int_\Rd \varphij(\cdot,s_1)\,\d\sigma^j_{s_1}-\int_\Rd \varphij(\cdot,s_0)\,\d\sigma^j_{s_0} &=
  \int_{s_0}^{s_1}\int_\Rd  \big(\tilde A_{n,m}-\tilde A\big)\cdot
  \nabla\varphij_{n,m}\,\d \sigma^j_s\,\d s
  \\&\topref{eq:49}\le\ell \int_{s_0}^{s_1}\int_\Rd  \big|\tilde A_{n,m}-\tilde A\big|
  \,\d \sigma^j_s\,\d s
\end{align*}
Summing up the these equation for $j=1,2$ we obtain
\begin{equation} \label{eq:4}
  \int_\Rd \varphiu\,\d \sigma^1_{s_1}+
  \int_\Rd \varphid\,\d\sigma^2_{s_1}\le
  \int_\Rd \varphiu_{n,m}(\cdot,s_0)\,\d \sigma^1_{s_0}+
  \int_\Rd \varphid_n(\cdot,s_0)\,\d\sigma^2_{s_0}+\ell K_{n,m}
\end{equation}
Theorem \ref{thm:crucial} yields
$\varphiu_{n,m}(\yu,s_0)+\varphid_{n,m}(\yd,s_0) \le h(|\yu-\yd|)$
which implies \eqref{eq:64}.

\noindent\underline{Step 4:}
\begin{equation}
  \label{eq:65}
  \limsup_{n\up+\infty}\Big(\limsup_{m\up+\infty} K_{n,m}\Big)=0.
\end{equation}
Let us first notice that setting $t_i:=\sft(s_i)$ and recalling that
$\sft'(s)=\rme^{-\lambda \sft(s)}$ we have
\begin{align*}
  \int_{s_0}^{s_1}\int_\Rd  |\tilde A_{n,m}-\tilde A|
  \,\d \sigma^1_s\,\d s&=
  \int_{\sfs(t_0)}^{\sfs(t_1)}\sft'(s)\int_\Rd
  |A_{n,m}-A|\,\d\rho^i_{\sft(s)}\,\d s=
  \int_{t_0}^{t_1}\int_\Rd
  |A_{n,m}-A|\,\d\rho^i_{t}\,\d t  
\end{align*}
so that
\begin{align*}
  K_{n,m}=K_{n,m}^1+K_{n,m}^2,\quad
  K^j_{n,m}:=\int_{t_0}^{t_1}\int_\Rd |A_{n,m}-A|\,\d\rho^j_{t}\,\d
  t\quad j=1,2.
\end{align*}
We can estimate $K^j_{n,m}$ by
\begin{align*}
  K^j_{n,m}\le \int_{t_0}^{t_1}\int_\Rd |A_{n,m}-A_n|\,\d\rho^j_{t}\,\d t  +
  \int_{t_0}^{t_1}\int_\Rd |A_{n}-A|\,\d\rho^j_{t}\,\d t,
\end{align*}
observing that
by \eqref{eq:34}, \eqref{eq:62}, and the Lebesgue Dominated Convergence
Theorem we get
\begin{displaymath}
  \lim_{m\up+\infty}K^j_{n,m}= 
  \int_{t_0}^{t_1}\int_\Rd |A_{n}-A|\,\d\rho^j_{t}\,\d t.
\end{displaymath}
{Since $|A_n(x)|\le |\sfA^\circ(x)|\le |A(x)|=|B(x)-\lambda x|$ for every $x\in
  \Rd$, the integrability assumption \eqref{Cond},} a further application of the Lebesgue Theorem, and \eqref{B approx}
yield
\begin{equation}
  \label{eq:72}
  \lim_{n\up+\infty}\Big(\lim_{m\up+\infty} K_{n,m}\Big)=
   \int_{t_0}^{t_1}\int_\Rd |\sfA^\circ-A|\,\d\rho^j_{t}\,\d t.
\end{equation}

This last  integrand is $0$ if $A$ coincides with the minimal
selection of $\sfA$, in particular when $A$ is continuous. In the
general case, \Killed{applying} the regularity result of
\cite{Bogachev-Krylov-Rockner01}
shows that $\rho^j_t\ll \Leb d$ for $\Leb 1$ a.e.\ $t\in (0,+\infty)$
and \eqref{eq:26} says that $\sfA^\circ=A$ $\Led$-a.e.\ in $\Rd$;
therefore the last integral of \eqref{eq:72} vanishes and we get \eqref{eq:65}.
%
%

\noindent\underline{Step 5:} \emph{conclusion.}

Thanks to \eqref{eq:65}, passing to the limit in \Killed{\eqref{eq:4}} {\eqref{eq:64}} we obtain
\begin{equation*}
  \int_\Rd \phiu\,\d \sigma^1_{s_1}+\int_\Rd \phid\,\d\sigma^2_{s_1}\le\Cost h{\sigma^1_{s_0}}{\sigma^2_{s_0}}.
\end{equation*}
Taking the supremum with respect to $\phiu,\phid\in
C^\infty_\rmc(\Rd)$ and recalling Proposition \ref{Kant+}
we obtain \eqref{eq:60}.
\begin{remark}
  \upshape
  As it appears from the final argument of the previous step 4, in the case
  when $A=B-\lambda I$ is the minimal selection $\sfA^\circ$ of $\sfA$ (in
  particular when $B$ is continuous), we do not need to invoke the
  regularity result of \cite{Bogachev-Krylov-Rockner01} to conclude
  our proof.
\end{remark}

\begin{proof}[Proof of Corollary \ref{cor:main}]
For (a), it is sufficient to observe that $e^{\lambda t} \ge 1$; this implies $h(r) \le h_{\lambda t}(r)$ and so 
\begin{equation*}
 \Cost h{\mu^1_t}{\mu^2_t} \le \Cost {h_{\lambda t}}{\mu^1_t}{\mu^2_t} 
 \topref{++}\le \Cost h{\mu^1_0}{\mu^2_0}.
\end{equation*}
Similarly, for (a) and (b)
\begin{equation*}
  e^{p\lambda t}\Cost h{\mu^1_t}{\mu^2_t}\le\Cost {h_{\lambda t}}{\mu^1_t}{\mu^2_t} 
 \topref{++}\le \Cost h{\mu^1_0}{\mu^2_0}.
\end{equation*}
We conclude recalling that 
\begin{equation*}
 W_p(\mu^1,\mu^2) = \Cost h{\mu^1}{\mu^2}^{1/p} \mbox{ with } h(r) = {|r|}^p 
\end{equation*}
and applying (a) and (b).
\end{proof}
       
\def\cprime{$'$} \def\cprime{$'$}

\end{document}